\documentclass[11pt]{amsart}

\usepackage{amssymb,upref}
\usepackage[mathcal]{euscript}
\usepackage[cmtip,all]{xy}

\newtheorem{theorem}[subsection]{Theorem}

\newtheorem{definition}[subsection]{Definition}
\newtheorem{example}[subsection]{Example}
\newtheorem{lemma}[subsection]{Lemma}
\newtheorem{proposition}[subsection]{Proposition}

\title{Representations of Generalized Almost-Jordan algebras }

\author [Marcelo Flores]{Marcelo Flores}
 \address{ Departamento de Matem\'aticas,
 Facultad  de Ciencias, Universidad de Valpara\'{\i}so.
Err\'azuriz 1834, Valpara\'{\i}so, Chile.}
\email{mfloreshenriquez@gmail.com}

\author[Alicia Labra]{Alicia Labra$^{\star}$}
\thanks{$^{\star}$Supported by Fondecyt 1120844.}
\address{Departamento de Matem\'aticas,
Facultad de Ciencias, Universidad de Chile.  Casilla 653, Santiago, Chile}
\email{alimat@uchile.cl}





\date{}

\begin{document}


\begin{abstract}
This paper deals with the variety of  commutative
algebras satisfying the identity $ \beta \{(yx^2)x - ((yx)x)x\} + \gamma \{yx^3 - ((yx)x)x \}=0$ where $\alpha, \beta$ are scalars.
These algebras appeared as one of the four families of degree four identities in Carini, Hentzel and Piacentini-Cattaneo \cite{CHP}.
We give a  characterization of representations and irreducible modules on these algebras. Our results require that the characteristic
of the ground field was different from $2,3.$
\end{abstract}

\medskip

\maketitle


\section{Introduction}

Let $A$ be a commutative not necessarily associative algebra over an infinite  field $F.$
Let $x$ be an element in $A.$ We define the principal powers of $x$ by $x^1 = x, \; x^{n+1} = x^n x$ for all $n \geq 1.$

A {\it Jordan algebra} is a commutative algebra satisfying the identity $x^2(yx) -(x^2y)x = 0.$  It is a well known variety of algebra, that is {\it power-associative,} i.e., the subalgebra generated by any element of the algebra, is associative. See \cite{ Jacobson 1}, \cite{Schafer 2} for properties of these varieties of algebras. It is known (see  \cite{Osborn 1}) that a Jordan algebra satisfies the identity $3((yx^2)x) = 2 ((yx)x)x + yx^3.$ These algebras, called {\it almost-Jordan algebras}  have been studied by Osborn \cite{Osborn 1}, \cite{Osborn 2}, Petersson \cite{Petersson}, Sidorov  \cite{Sidorov}, and Hentzel and Peresi  \cite{HP}. In this last paper, the authors proved that every semi-prime almost-Jordan algebra is a Jordan algebra and this fact justified the name of these algebras.

\medskip
 A {\it generalized almost-Jordan algebra} is a commutative algebra satisfying the
identity
\begin{equation}\label{1}
    \beta \{(yx^2)x - ((yx)x)x\} + \gamma \{yx^3 - ((yx)x)x \}=0
\end{equation}
for every x,y $\in A$ where $\alpha, \beta$ are scalars, and $(\beta, \gamma) \neq (0,0).$
For $\beta = 3$ and $\gamma = -1,$ we have an almost-Jordan algebra.

\medskip
In the study of degree four identities not implied by conmutativity, Osborn \cite{Osborn 2} classified those that were implied by
 the fact of possessing a unit element.  Carini, Hentzel and Piacentini-Cattaneo \cite{CHP} extended this work by dropping the
 restriction on the existence of the unit element.
This result require that  characteristic $F \neq 2,3.$  The identity defining a generalized almost-Jordan algebra with $\beta, \gamma \in F$ appears
as one of these identities.

 We observe that there are generalized almost-Jordan algebras
that are not Jordan algebras.

\begin{example}\label{ej1}
Let $A$ be a commutative algebra over $F$ with base $\{e,a\}$, and multiplication table given by
$$e^2 = e, \; a^2 =  e, \;  \: \mbox{all other products being zero}$$
Then $A$ satisfies identity (\ref{1}) for  $\beta=1$ y $\gamma=-1$. Moreover  $A$ is not a Jordan algebra, since it is not power-associative since $a^2(aa)\not= (a^2a)a$.
\end{example}

\begin{example} \label{ej2}
Let  $A$ be a commutative algebra over $F$ with base $\{e,a\}$ and multiplication table given by
$$e^2 = e, \; ea = ae =  -e - a, \;  a^2 =  e+ a. \: $$
Then $A$ satisfies identity (\ref{1}) with $\beta=0$ and $\gamma\not=0$. That  is $A$ satisfies  $x^3y=((xy)x)x$ for every
$x,y \in A$. Moreover  $A$ is not a Jordan algebra, since $a^2(aa) = 2a\not= (a^2a)a = 0$.
\end{example}

\begin{example}\label{ej3}
Let $A$ be a commutative $F$-algebra with base $\{e,a,b\}$ and multiplication table given by
$$e^2 = e, \; ab = ba = b, \;  \: \mbox{all other products being zero} $$
Then $A$ satisfies identity (\ref{1}) with $\beta=1$ and $\gamma=1$, for every  $\alpha \in F$. Moreover $A$ is not power-associative since $(a+b)^4 = 2b$ and $(a+b)^2 (a+b)^2 = 0$ and $A$ is not a  Jordan
algebra
\end{example}

\bigskip
Generalized almost-Jordan algebras $A$ have been studied in \cite{AL}, where the authors  proved that these algebras always have a trace form in terms of the trace of right multiplication operators. They also prove that if $A$ is finite-dimensional and solvable, then it is nilpotent and found three conditions, any of which implies that a finite-dimensional right-nilalgebra $A$ is nilpotent.  In \cite{Arenas}  the author found the Wedderburn decomposition of $A$ assuming that  for every ideal $I$ of $A$ either $ I$ has a non zero idempotent or $I \subset R, R$ the solvable radical of $A$ and the quotient $A /R$ is separable and in \cite{HL} where, assuming that $A$ also satisfies $((xx)x)x = 0$  the authors proved the existence of an ideal $I$ of $A$ such that $AI = IA= 0$ and the quotient algebra $A/I$ is power-associative.

In this paper we deal with representations of algebras.  Let $A$ be an algebra which belongs to a class ${\mathcal C}$
of commutative algebras over a field $K$ and let $M$ be a vector
space over $F.$ As in Eilenberg \cite{Eilenberg}, we say that a linear map
$\rho: A \rightarrow End(M)$
 is a  {\it a representation of $A$ in the class $\mathcal C$} if the split null
extension $ S = A \oplus M $ of $M,$ with
multiplication given by
$$(a + m)(b + n) = ab + \rho(a)(n) +
\rho(b)(m) \; \; \forall \; a,b \in A, m,n \in M $$
 belongs to the class $\mathcal C$.

Representations have been studied for different algebras, for example,  in \cite{Jacobson 3}, and \cite{Osborn 4} for Jordan algebras, in\cite{Jacobson 2}, \cite{Schafer 1} and \cite{Shestakov} for alternative algebras, in  \cite{McCrimmon} for composition algebras, in \cite{Humphreys} for Lie algebras, in \cite{Sidorov} for Lie triple algebras, in \cite{Osborn 3} for Novikov algebras, in \cite{BIM} for Bernstein algebras, in \cite{LR} for train algebras of rank 3, in \cite{BLR} for power-associative train algebras of rank 4, in \cite{BL} for algebras of rank 3, in \cite{ES 1} for Malcev algebras and in \cite{ES 2} for Malcev super algebras.

A representation $\rho: A \rightarrow End(M)$ is said to be {\it
irreducible} if $M \neq 0$ and there is no proper subspace of $M$
which is invariant under all the transformations $\rho(a)$, $a \in A$,
and is said to be {\it $r$-dimensional} if $\dim M = r$.

In this paper we study representations and irreducible modules over generalized almost-Jordan algebras $A$.  The paper is organized as follows.
 In Section $2$ we find necessary and sufficient conditions
for a linear map $\rho$ to be a representation. We find the action of $A$  over $M$ when $0\not \in \{\gamma,\beta+ \gamma,\beta+2\gamma \}$ (see Theorem \ref{teorema 1}).
In Section $3$ we look at the three cases that arose as exception in the above Theorem. In Section $4$ we study irreducible modules over generalized almost-Jordan algebras
and we prove two theorems when $0\not \in \{\beta, \gamma,\beta+ \gamma,\beta+2\gamma,  \beta+2\gamma\}$ (see Theorem \ref{teoasc} and 
Theorem \ref{teoasc2}).  The last Theorem need also the condition $ \beta- \gamma \neq 0.$ In Section $5$ we look at five cases that arose as exception in Theorem \ref{teoasc} and
Theorem \ref{teoasc2}. Moreover, in the case  $\beta + \gamma= 0$ we give an example of a $2$-dimensional irreducible module $M.$ Finally we present some open problems. 


\section{Representations}

\medskip
In this section we study  representations of generalized almost-Jordan algebras.

\begin{lemma}\label{lema 1}
Let $A$ be a generalized almost-Jordan algebra and
$ \rho: A \rightarrow End(M) $ a  linear map. Then $\rho$ is a representation of $A$, if and only if for every $a,b \in A$ the following identities hold
\begin{equation}\label{repre1}
    (\beta+\gamma)\rho_{a}^{3}-\beta \rho_a\rho_{a^2}-\gamma\rho_{a^3}=0
  \end{equation}
  \begin{equation}\label{repre2}
    (\beta +\gamma)(\rho_a\rho_{ab}+\rho_a^2\rho_b+\rho_{(ab)a})-\beta(2\rho_a\rho_b\rho_a +\rho_{a^2b})-\gamma(2\rho_b\rho_a^2+\rho_b\rho_{a^2})=0
  \end{equation}
where $\rho_a:=\rho(a)\in End(M)$, and for every $a,b \in A$, $\rho_a\circ\rho_b$ will be  denoted by $\rho_a\rho_b$.
\end{lemma}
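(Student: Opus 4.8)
The plan is to exploit the structure of the split null extension $S = A \oplus M$, in which $M$ is an ideal with $M\cdot M = 0$. Writing $a\cdot m = \rho_a(m)$ for $a \in A$, $m \in M$, and recalling that every product of two elements of $M$ vanishes, I would evaluate the defining identity (\ref{1}) on arbitrary elements $x = a + m$, $y = b + p$ of $S$ and grade the result by the number of factors lying in $M$. Every monomial containing two or more $M$-factors is zero, so the identity reduces to its component of $M$-degree $0$ together with its component of $M$-degree $1$. The $M$-degree $0$ component is exactly identity (\ref{1}) evaluated in $A$, hence holds because $A$ is a generalized almost-Jordan algebra; therefore $\rho$ is a representation if and only if the $M$-degree $1$ component vanishes for all choices of $a,b,m,p$. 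Since a term of $M$-degree $1$ contains a single $M$-factor, coming either from $y$ (that is, from $p$) or from $x$ (that is, from $m$), this component splits into two independent parts, and I would extract one equation from each.

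For the first part I would set $y = p \in M$ and $x = a \in A$. Using $p\cdot a^2 = \rho_{a^2}(p)$, $((pa)a)a = \rho_a^3(p)$, $p a^3 = \rho_{a^3}(p)$, and so on, each of the four monomials in (\ref{1}) becomes an operator in $\rho_a$, $\rho_{a^2}$, $\rho_{a^3}$ applied to $p$. Collecting coefficients and requiring the result to vanish for all $p$ yields the operator identity (\ref{repre1}). For the second part I would set $y = b \in A$ and $x = a + m$, keeping only the terms linear in $m$; the quadratic and cubic terms in $m$ vanish because they carry at least two $M$-factors. The computation rests on the expansions $x^2 = a^2 + 2\rho_a(m)$ and $x^3 = a^3 + \rho_{a^2}(m) + 2\rho_a^2(m)$, after which expanding $(yx^2)x$, $yx^3$ and $((yx)x)x$ and collecting the coefficient of $m$ produces the operators $\rho_{a^2b} + 2\rho_a\rho_b\rho_a$, $\rho_b\rho_{a^2} + 2\rho_b\rho_a^2$ and $\rho_{(ab)a} + \rho_a\rho_{ab} + \rho_a^2\rho_b$; weighting these by $\beta$, $\gamma$ and $-(\beta+\gamma)$ respectively and setting the sum to zero gives precisely (\ref{repre2}). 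The converse is then immediate: if both (\ref{repre1}) and (\ref{repre2}) hold for all $a,b\in A$, every $M$-degree $1$ component of (\ref{1}) evaluated on $S$ vanishes, and the remaining components vanish automatically, so $S$ lies in the variety and $\rho$ is a representation.

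The only delicate point is the bookkeeping in the cubic expansions, above all in $((yx)x)x$ and $(yx^2)x$, where one must track the order of composition of the operators $\rho$ carefully, since $\rho_a\rho_b$ and $\rho_b\rho_a$ need not coincide and the split-null product must place the surviving algebra factor on the correct side of the $M$-factor. Keeping a consistent convention $a\cdot m = \rho_a(m)$ throughout, and noting that commutativity of $A$ lets me write products such as $a^2 b$ and $(ab)a$ unambiguously as algebra elements, should reduce the extraction of coefficients to a routine, if lengthy, calculation.
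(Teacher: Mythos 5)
Your proposal is correct and takes essentially the same route as the paper: both evaluate the defining identity on $x=a+m$, $y=b+n$ in the split null extension, discard all terms with two or more $M$-factors, and read off (\ref{repre1}) from the component linear in $n$ and (\ref{repre2}) from the component linear in $m$. Your expansions of $(yx^2)x$, $yx^3$ and $((yx)x)x$ agree term-by-term with the paper's computation; the only difference is organizational, in that you isolate the two degree-one components by specializing one $M$-variable to zero rather than collecting everything in a single expansion.
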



\begin{proof}
 $\rho$ is a representation of  $A$ if and only if every $a+m, b+n \in A\oplus M$ satisfy the identity (\ref{1}).  Straightforward calculations give
 \begin{equation}\nonumber
    [(a+m)^2(b+n)](a+m)=(a^2b)a + 2\rho_a(\rho_b(\rho_a(m)))+\rho_a(\rho_{a^2}(n))+\rho_{a^2b}(m)
 \end{equation}
  \begin{equation}\nonumber
    [(a+m)^3](b+n)=a^3b + 2\rho_b(\rho_a(\rho_a(m)))+\rho_b(\rho_{a^2}(m))+\rho_{a^3}(n)
 \end{equation}

$$[[(a+m)(b+n)](a+m)](a+m)=((ab)a)a + \rho_a(\rho_a(\rho_b(m)))+\rho_a(\rho_{ab}(m)) +$$
$$\rho_{a}^3(n)+\rho_{(ab)a}(m)$$
Replacing $x=a+m, y=b+n$ in identity (\ref{1})  we get
    $$\beta \{2\rho_a(\rho_b(\rho_a(m)))+\rho_a(\rho_{a^2}(n))+\rho_{a^2b}(m)- \rho_{a}^2((\rho_b(m))-\rho_a(\rho_{ab}(m))-\rho_{a}^3(n)- $$
$$\rho_{(ab)a}(m) \}
      + \gamma \{2\rho_b(\rho_{a}^2(m))+\rho_b(\rho_{a^2}(m))+\rho_{a^3}(n) - \rho_{a}^2((\rho_b(m))-\rho_a(\rho_{ab}(m))-$$
$$\rho_{a}^3(n)-\rho_{(ab)a}(m)\}=0$$

Now it is  easy to see that this relation holds if and only if identities (\ref{repre1}) and (\ref{repre2}) hold in $A.$
\end{proof}


In the following suppose that $A$ has an  idempotent element $e \neq 0$. Taking  $a=e$ in identity  (\ref{repre1}), we obtain
\begin{equation}\label{poli_car}
     (\beta+\gamma)\rho_{e}^{3}-\beta \rho_{e}^2-\gamma\rho_{e}=0
\end{equation}

\begin{proposition}\label{pr:PeirceM}
Let $A$ be a generalized almost-Jordan algebra and  $\beta, \gamma$ satisfying $0\not \in \{\gamma,\beta+ \gamma,\beta+2\gamma \}$. Suppose that  $A$ has an  idempotent element
$e\neq 0$. Let $\rho: A \rightarrow End(M)$ be  a representation of A. Then
$$M=M_0\oplus M_1 \oplus M_{\lambda}$$
where  $M_i=\{m\in M | \rho_e(m)=im \}$, and $i\in\{0,1,\lambda\}$.
\end{proposition}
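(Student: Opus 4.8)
The plan is to reduce the whole statement to a diagonalization fact about the single operator $\rho_e \in End(M)$. Setting $a = e$ in (\ref{repre1}) already produced equation (\ref{poli_car}), which says that $\rho_e$ is annihilated by the cubic polynomial $p(t) = (\beta+\gamma)t^3 - \beta t^2 - \gamma t$. First I would factor $p$. Clearly $t=0$ is a root, and $t=1$ is a root because $(\beta+\gamma) - \beta - \gamma = 0$; comparing coefficients in the remaining quadratic factor $(\beta+\gamma)t^2 - \beta t - \gamma$ identifies the third root as $\lambda = -\gamma/(\beta+\gamma)$. Thus
$$p(t) = (\beta+\gamma)\, t\,(t-1)\,(t-\lambda),$$
and this $\lambda$ is exactly the one in the statement, well defined precisely because $\beta+\gamma \neq 0$. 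Since $\beta,\gamma \in F$, all three roots lie in $F$ and no field extension is needed.

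Next I would verify that the roots $0,1,\lambda$ are pairwise distinct, and here the three hypotheses enter: $\lambda \neq 0$ because $\gamma \neq 0$, and $\lambda \neq 1$ because $\lambda = 1$ is equivalent to $\beta + 2\gamma = 0$, which is excluded. Hence $p$ splits into three distinct linear factors.

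Because $\rho_e$ satisfies a polynomial that is a product of distinct linear factors, its minimal polynomial splits the same way and $\rho_e$ is diagonalizable with eigenvalues in $\{0,1,\lambda\}$. To keep the argument self-contained I would make this explicit via Lagrange-interpolation idempotents, all well defined by the distinctness just established:
$$E_0 = \frac{(\rho_e - I)(\rho_e - \lambda I)}{(0-1)(0-\lambda)}, \quad E_1 = \frac{\rho_e(\rho_e - \lambda I)}{(1)(1-\lambda)}, \quad E_\lambda = \frac{\rho_e(\rho_e - I)}{\lambda(\lambda-1)}.$$
Since these are the Lagrange interpolants of the constant polynomial $1$ at the nodes $0,1,\lambda$, one gets $E_0 + E_1 + E_\lambda = I$; and for each $i$, the product $(\rho_e - iI)E_i$ is a scalar multiple of $p(\rho_e)/(\beta+\gamma) = 0$. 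Therefore, for any $m \in M$, the identity $m = E_0 m + E_1 m + E_\lambda m$ exhibits the desired decomposition, while $\rho_e(E_i m) = i\, E_i m$ shows $E_i m \in M_i$.

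Finally, for directness of the sum: if $m_0 + m_1 + m_\lambda = 0$ with $m_i \in M_i$, then applying $E_i$ and using $E_i m_j = \delta_{ij} m_j$ (immediate from $\rho_e$ acting as the scalar $j$ on $m_j$) yields $m_i = 0$. Hence $M = M_0 \oplus M_1 \oplus M_\lambda$. I do not foresee a genuine obstacle; the only point demanding care is the bookkeeping that pins down $\lambda = -\gamma/(\beta+\gamma)$ and confirms that the three hypotheses $0 \notin \{\gamma,\beta+\gamma,\beta+2\gamma\}$ are exactly what is required to keep the roots distinct and the idempotents well defined.
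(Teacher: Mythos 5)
Your proposal is correct and follows essentially the same route as the paper: specialize (\ref{repre1}) at $a=e$ to get (\ref{poli_car}), factor $p(t)=(\beta+\gamma)t(t-1)(t-\lambda)$ with $\lambda=-\gamma/(\beta+\gamma)$, use the hypotheses $\gamma\neq 0$, $\beta+\gamma\neq 0$, $\beta+2\gamma\neq 0$ to see the three roots are distinct, and conclude that $\rho_e$ is diagonalizable. The only difference is cosmetic: where the paper invokes the standard minimal-polynomial criterion for diagonalizability, you re-derive it explicitly via Lagrange-interpolation idempotents, which is a valid, self-contained way of finishing the same argument.
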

\begin{proof}

Using identity (\ref{poli_car}) we see that $\rho_e$ satisfies the polynomial $p(x)=(\beta+\gamma)x^{3}-\beta x^2-\gamma x=0$. Since $\beta+\gamma \neq 0$ we define $\lambda=\frac{-\gamma}{\beta+\gamma}\in F$ and  $p(x)=(\beta+\gamma)x(x-1)(x-\lambda)$. Moreover, since $\gamma\neq 0, \beta+2\gamma \neq 0$, we have that $\lambda\neq 0$ and $\lambda\neq 1$, therefore $p(x)$ has only   simple roots.  On the other hand, since  the minimal polynomial of the operator $\rho_e$, is a divisor of $p(x)$ then  $\rho_e$, also has simple roots. Then  $\rho_e$ is diagonalizable and
$$ M=M_0\oplus M_1 \oplus M_{\lambda}$$
\end{proof}

In \cite[Theorem 1]{Arenas} M. Arenas proves that the Peirce decomposition of a  generalized almost-Jordan algebra $A$ is given by  $A=A_0\oplus A_1 \oplus A_{\lambda}, \;  A_i=\{a\in A | e a=i a \}$, where $\lambda=\frac{-\gamma}{\beta+ \gamma}$. Moreover, when $0\not \in \{\gamma,\beta+ \gamma,\beta+2\gamma \}$ we have the following relations among these subspaces
  $$(A_0)^2\subseteq A_0,\   (A_1)^2\subseteq A_1,\   A_0A_1=\{0\} $$
    $$A_{\lambda}A_0\subseteq A_{\lambda},\   A_{\lambda}A_1\subseteq A_{\lambda},\    (A_{\lambda})^2\subseteq A_0\oplus A_1$$

\bigskip
 Next we obtain relations among  $A_i$ and $M_i$, where $i\in\{0,1,\lambda\}$. Linearising  identity (\ref{repre1}) we  have
 \begin{equation}\label{linrepre1}
   (\beta+\gamma)(\rho_a\rho_b\rho_a+\rho_b\rho_{a}^2+\rho_{a}^2\rho_b)-2\beta\rho_a\rho_{ab}-\beta\rho_b\rho_{a^2}-2\gamma\rho_{(ab)a}-\gamma\rho_{a^2b}=0
 \end{equation}
 Moreover sustracting identities (\ref{repre2}) and (\ref{linrepre1}), we  obtain
 \begin{equation}\label{dif}
    (\gamma-\beta)\{\rho_a\rho_b\rho_a+ \rho_a\rho_{ab}-\rho_b\rho_{a}^{2}-\rho_{(ab)a}\}+ (\beta+\gamma)\{2\rho_{a}^{2}\rho_b-\rho_b\rho_{a^2}-\rho_{a^2b}\}=0
 \end{equation}

 \begin{theorem}\label{teorema 1}
 Let $A$ be a generalized almost-Jordan algebra and  $\beta, \gamma$ satisfying  $0\not \in \{\gamma,\beta+ \gamma,\beta+2\gamma \}$. Suposse that  $A$ has an idempotent element  $e\neq 0$. Let $\rho: A \rightarrow End(M)$ be a representation of $A. $ Then the action of $A$ on $M$ satisfies the following relations
 $$A_0\cdot M_0 \subseteq M_0,\ A_0\cdot M_1 = \{0\},\ A_0\cdot M_{\lambda} \subseteq M_{\lambda}, $$
 $$A_1\cdot M_0 =\{0\},\ A_1\cdot M_1 \subseteq M_1,\ A_1\cdot M_{\lambda} \subseteq M_{\lambda},$$
 $$A_{\lambda}\cdot M_0 \subseteq M_{\lambda},\ A_{\lambda}\cdot M_1 \subseteq M_{\lambda},\ A_{\lambda}\cdot M_{\lambda} = M_0\oplus M_1.$$
 Moreover, if we assume that $\beta\neq0$ and $\beta+3\gamma\neq 0$, then $A_0\cdot M_{\lambda}=A_{\lambda}\cdot M_0=\{0\}$ and $ A_{\lambda}\cdot M_{\lambda}=\{0\}, $ where $\lambda=\frac{-\gamma}{\beta+ \gamma}$.
 \end{theorem}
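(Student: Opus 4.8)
The plan is to convert the two operator identities (\ref{repre2}) and (\ref{linrepre1}) into scalar eigenvalue conditions by evaluating them at $a=e$ and on an eigenvector of $\rho_e$. First I would substitute $a=e$ and a homogeneous $b\in A_j$ (with $eb=jb$) into each identity. Because $e$ is idempotent and $eb=jb$, every product simplifies: $a^2=e$, $ab=eb=jb$, $a^2b=jb$, and $(ab)a=(jb)e=j^2b$. Thus each identity becomes an operator equation involving only $\rho_e$ and $\rho_b$, with scalar coefficients depending on $j$.

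Next I would apply each resulting operator equation to a vector $m\in M_k$ and set $v:=\rho_b(m)$. On the right one uses $\rho_e(m)=km$, $\rho_e^2(m)=k^2m$; on the left one decomposes $v=v_0+v_1+v_\lambda$ with $v_i\in M_i$, so that $\rho_e$ acts on $v_i$ as multiplication by $i$. By the directness of the decomposition $M=M_0\oplus M_1\oplus M_\lambda$ (Proposition \ref{pr:PeirceM}), each equation separates into three scalar relations $h(i;j,k)\,v_i=0$ and $p(i;j,k)\,v_i=0$ for $i\in\{0,1,\lambda\}$, where
\[
h(i;j,k)=(\beta+\gamma)(i^2+ki+k^2)-2\beta ji-\beta k-2\gamma j^2-\gamma j,
\]
\[
p(i;j,k)=(\beta+\gamma)(i^2+ji+j^2)-2\beta ki-\beta j-2\gamma k^2-\gamma k,
\]
coming from (\ref{linrepre1}) and (\ref{repre2}) respectively (note the symmetry $p(i;j,k)=h(i;k,j)$, and that (\ref{dif}) is essentially their difference). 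The conclusion is that the $M_i$-component of $\rho_b(m)$ must vanish whenever $h(i;j,k)\neq 0$ or $p(i;j,k)\neq 0$; equivalently, for $b\in A_j$ and $m\in M_k$ one has $\rho_b(m)\in\bigoplus\{M_i:\ h(i;j,k)=p(i;j,k)=0\}$.

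Then I would carry out the bookkeeping over the nine pairs $(j,k)\in\{0,1,\lambda\}^2$, using $\lambda=\frac{-\gamma}{\beta+\gamma}$, hence $(\beta+\gamma)\lambda=-\gamma$ and $(\beta+\gamma)\lambda^2=-\gamma\lambda$, to factor the polynomials. For example $h(i;0,0)=p(i;0,0)=(\beta+\gamma)i^2$ vanishes only at $i=0$, giving $A_0\cdot M_0\subseteq M_0$; and $h(i;1,0)=(\beta+\gamma)i^2-2\beta i-3\gamma$ takes the values $-3\gamma,\ -(\beta+2\gamma),\ \lambda(\beta+2\gamma)$ at $i=0,1,\lambda$, all nonzero under the hypotheses $\gamma\neq 0,\ \beta+2\gamma\neq 0$, so $A_1\cdot M_0=\{0\}$. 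In each remaining case I would evaluate $h$ and $p$ at $i=0,1,\lambda$ and check, invoking only $\gamma\neq 0,\ \beta+\gamma\neq 0,\ \beta+2\gamma\neq 0$ (that is $\lambda\neq 0,1$), that exactly the asserted component(s) survive; at a borderline value of $\lambda$ where one of $h,p$ accidentally vanishes at an unwanted $i$, the complementary polynomial is nonzero there and removes that component (this is what happens, e.g., for $A_0\cdot M_1$ and for $A_\lambda\cdot M_1$). For the final ``moreover'' assertion I would record that $h(\lambda;0,\lambda)=p(\lambda;\lambda,0)=-\lambda(\beta+3\gamma)$, so the lone surviving $\lambda$-component in $A_0\cdot M_\lambda$ and $A_\lambda\cdot M_0$ disappears exactly when $\beta+3\gamma\neq 0$, while $h(0;\lambda,\lambda)=\lambda\beta(2\lambda-1)$ and $h(1;\lambda,\lambda)=\beta(\lambda-1)(2\lambda-1)$ show that both surviving components of $A_\lambda\cdot M_\lambda$ vanish precisely when in addition $\beta\neq 0$ (i.e. $\lambda\neq -1$), since $s(2\lambda-1)=-(\beta+3\gamma)$.

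The main obstacle is not conceptual but organizational: it is the sheer volume of the case analysis (nine products $\times$ three eigenvalues $\times$ two polynomials), together with the care required at the degenerate values of $\lambda$ where a single one of the two identities is insufficient and one must fall back on the other. Setting up the compact formulas for $h$ and $p$ once, and exploiting the symmetry $p(i;j,k)=h(i;k,j)$, is what keeps the verification manageable.
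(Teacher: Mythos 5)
Your proposal is correct, but it takes a genuinely different route from the paper. The paper's proof is structural and essentially computation-free: since $\rho$ is a representation, the split null extension $S=A\oplus M$ is itself a generalized almost-Jordan algebra with the same $(\beta,\gamma)$, so the Peirce decomposition theorem of Arenas \cite{Arenas} applies to $S$, giving $S=S_0\oplus S_1\oplus S_\lambda$ together with the multiplication table of the subspaces $S_i$; observing that $A_i=S_i\cap A$ and $M_i=S_i\cap M$, all nine relations follow by intersecting, and the ``moreover'' clause follows because under $\beta\neq0$, $\beta+3\gamma\neq0$ one has $(S_\lambda)^2=S_0S_\lambda=\{0\}$. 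You instead stay inside $End(M)$: substituting $a=e$, $b\in A_j$ into identities (\ref{repre2}) and (\ref{linrepre1}), applying to $m\in M_k$ and projecting onto the eigenspaces of $\rho_e$, you reduce everything to the simultaneous vanishing of the two scalar polynomials $h(i;j,k)$ and $p(i;j,k)=h(i;k,j)$. Your setup and sample evaluations check out --- I verified, e.g., $h(i;1,0)\in\{-3\gamma,\,-(\beta+2\gamma),\,\lambda(\beta+2\gamma)\}$ at $i=0,1,\lambda$, $h(0;\lambda,\lambda)=\lambda\beta(2\lambda-1)$, $h(1;\lambda,\lambda)=\beta(\lambda-1)(2\lambda-1)$, $h(\lambda;0,\lambda)=-\lambda(\beta+3\gamma)$ --- including your key observation that for $A_0\cdot M_1$ and $A_\lambda\cdot M_1$ one of the two polynomials can vanish accidentally (e.g.\ $h(0;\lambda,1)=\gamma\beta(\beta+3\gamma)/(\beta+\gamma)^2$) so that the complementary one must be invoked. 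What the paper's route buys is brevity and the complete avoidance of case analysis, at the cost of importing Arenas's theorem for the algebra $S$; your route is computationally heavier (the full table over the nine pairs $(j,k)$ still has to be written out to make this a complete proof), but it is more self-contained --- it needs only the eigenspace decompositions of $R_e$ and $\rho_e$, not the full Peirce multiplication table --- and it makes visible exactly where each hypothesis $\gamma\neq0$, $\beta+\gamma\neq0$, $\beta+2\gamma\neq0$, $\beta\neq0$, $\beta+3\gamma\neq0$ enters, which the paper's argument hides inside the citation. Two small blemishes: the symbol $s$ in your relation $s(2\lambda-1)=-(\beta+3\gamma)$ is undefined (it should be $\beta+\gamma$), and, exactly like the paper's own proof, your argument establishes only the inclusion $A_\lambda\cdot M_\lambda\subseteq M_0\oplus M_1$, not the equality asserted in the statement.
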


  \begin{proof}
Since $\rho $ is  a representation of $A,$ we have that $A$ and $S=A\oplus M$, are generalized almost-Jordan algebra for the same scalars $(\beta,\gamma)\in F\times F.$
If  $e$ is an idempotent element in $S$,  since  $0\not\in \{\gamma,\beta+ \gamma,\beta+2\gamma \} $ then the Peirce decomposition of $S$ relative to   $e$ is
$S=S_0\oplus S_1\oplus S_{\lambda}$, where $S_i=\{a+m\in S\ |\ e(a+m)=i(a+m)\}$ for $i=0,1,\lambda$. Moreover, we have the following relations
 $$ (S_0)^2\subseteq S_0,\quad (S_1)^2\subseteq S_1,\quad (S_{\lambda})^2\subseteq S_0\oplus S_1,$$
$$  S_{\lambda}S_0\subseteq S_{\lambda},\quad S_{\lambda}S_1\subseteq S_1,\quad S_0S_1=\{0\}.$$

On the other hand we have that $A_i=S_i\cap A$ and $M_i=S_i\cap M$, in fact $S_i\cap A=\{a+m\in S\ |\ e(a+m)=i(a+m)\}\cap A=\{a\in A\ |\ ea=ia\}=A_i$, similarly we have that  $M_i=S_i\cap M$.

\begin{itemize}
\item[]$A_0\cdot M_0 =(S_0\cap A)(S_0\cap M)\subseteq(S_0\cap M)=M_0,$
\item[]$A_1\cdot M_1 =(S_1\cap A)(S_1\cap M)\subseteq(S_1\cap M)=M_1,$
\item[]$A_{\lambda}\cdot M_{\lambda} =(S_{\lambda}\cap A)(S_{\lambda}\cap M)\subseteq(S_0\cap M)\oplus(S_1\cap M)=M_0\oplus M_1,$
\item[] $(S_0\cap A)(S_1\cap M)=(S_0\cap M)(S_1\cap A)=\{0\}$, that is $A_0\cdot M_1=A_1\cdot M_0 =\{0\}$.
\item[] $A_{\lambda}\cdot M_1=(S_{\lambda}\cap A)(S_{1}\cap M)\subseteq (S_{\lambda}\cap M)=M_{\lambda}$, similarly we have that  $A_1\cdot M_{\lambda}\subseteq M_{\lambda}$.
\item[] $A_{\lambda}\cdot M_0=(S_{\lambda}\cap A)(S_{1}\cap M)\subseteq (S_{\lambda}\cap M)=M_{\lambda}$. In a similar way we prove that  $A_0\cdot M_{\lambda}\subseteq M_{\lambda}$.
\end{itemize}

 If we add the conditions  $\beta\neq0$ and $\beta+3\gamma\neq 0$, we have that  $(S_{\lambda})^2=S_0 S_{\lambda}=\{0\}$, and the
relations $A_0\cdot M_{\lambda}=A_{\lambda}\cdot M_0=\{0\}$ and $ A_{\lambda}\cdot M_{\lambda}=\{0\}$ follow.
  \end{proof}


 \section{Exceptional cases}
We now look at the three cases which arose as exception in Theorem \ref{teorema 1}.

\subsection{Case $\gamma=0$}
Let $A$ be a generalized almost-Jordan algebra and  $\gamma = 0. $ Then $\beta \neq 0,$ and  $A$ satisfies the identity  $(yx^2)x-((yx)x)x=0$, for every $x,y \in A.$ Let $\rho:A\rightarrow End(M)$ be a representation of $A$.
For these algebras the minimal polynomial of the operator  $R_e:A\rightarrow A$, is the same of the operator $\rho_e$ and it is given by $p(t)=t^2(t-1)$, (see identity (\ref{poli_car})). Then the Peirce decomposition of
 the algebra $A$ is $A=A_0\oplus A_1$, where $A_0=\{x\in A\ |\ (ex)e=0\}$ and  $A_1=\{x\in A\ |\ (ex)=x \}$. Similarly we have that $M=M_0\oplus M_1$, where $M_0=\{m\in M\ |\ \rho_e^2(m)=0\}$ and $M_1=\{m\in M\ |\ \rho_e(m)=m \}$.

Moreover, we have the relations (see \cite{CHP}).
$$A_0A_1\subseteq A_0,\ (A_0)^2\subseteq A_0.$$
\begin{lemma}\label{lemagamma}
Let $A$ be a generalized almost-Jordan algebra and  $\gamma = 0. $  Suppose that $A$ has an idempotent element $e\not=0$. Let $\rho:A \rightarrow End (M)$ be a representation of $A.$ Then
$$\ A_1\cdot M_0\subseteq M_0, A_0\cdot M_1\subseteq M_0,\ A_0\cdot M_0\subseteq M_0.$$
\end{lemma}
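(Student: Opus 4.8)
The plan is to run the very same argument as in the proof of Theorem~\ref{teorema 1}, transported to the degenerate Peirce picture of the case $\gamma=0$. Since $\rho$ is a representation, the split null extension $S=A\oplus M$ is again a generalized almost-Jordan algebra for the same scalars $(\beta,0)$, and $e$ is an idempotent of $S$. By identity~(\ref{poli_car}) right multiplication by $e$ in $S$ annihilates $p(t)=t^2(t-1)$, so $S$ admits the two-piece decomposition $S=S_0\oplus S_1$ with $S_0=\{s\in S\mid (se)e=0\}$ and $S_1=\{s\in S\mid se=s\}$, and the relations of \cite{CHP} recalled above apply to $S$:
\[
(S_0)^2\subseteq S_0,\qquad S_0S_1\subseteq S_0 .
\]
By commutativity the second inclusion also reads $S_1S_0\subseteq S_0$.

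First I would check that this decomposition restricts compatibly to the two summands. Right multiplication by $e$ in $S$ sends $a+m$ to $ae+\rho_e(m)$, hence preserves both $A$ and $M$; squaring it gives $R_e^2(a)+\rho_e^2(m)$. Consequently the condition $(se)e=0$ splits into $(ae)e=0$ and $\rho_e^2(m)=0$, so that $S_0\cap A=A_0$, $S_0\cap M=M_0$, $S_1\cap A=A_1$ and $S_1\cap M=M_1$; that is, $A_i=S_i\cap A$ and $M_i=S_i\cap M$ for $i=0,1$, exactly as in Theorem~\ref{teorema 1}.

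With this in hand each of the three inclusions is immediate, because every product in question carries an $S_0$-factor and therefore lands in $S_0$; intersecting with $M$ (a product of one element of $A$ with one element of $M$ lies in $M$) then forces it into $M_0$:
\[
A_1\cdot M_0=(S_1\cap A)(S_0\cap M)\subseteq S_1S_0\cap M\subseteq S_0\cap M=M_0,
\]
\[
A_0\cdot M_1=(S_0\cap A)(S_1\cap M)\subseteq S_0S_1\cap M\subseteq M_0,
\]
\[
A_0\cdot M_0=(S_0\cap A)(S_0\cap M)\subseteq (S_0)^2\cap M\subseteq M_0 .
\]

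The only genuine point requiring care, and hence the main obstacle, is the verification that the structural relations $(S_0)^2\subseteq S_0$ and $S_0S_1\subseteq S_0$ really hold for $S$; once these are granted for every $\gamma=0$ generalized almost-Jordan algebra possessing an idempotent---which is precisely the content cited from \cite{CHP}, applicable to $S$ because $S$ is such an algebra---nothing further is needed. In particular no relation of the form $(S_1)^2\subseteq S_1$ ever enters, since all three products to be controlled already contain a factor from $S_0$. If one preferred to avoid invoking \cite{CHP} for $S$, the same inclusions could instead be extracted directly from the representation identities (\ref{repre1}) and (\ref{repre2}) specialized to $\gamma=0$ (namely $\rho_a^3=\rho_a\rho_{a^2}$ and its companion trilinear identity), but the split-null-extension route is shorter and parallels the proof of Theorem~\ref{teorema 1}.
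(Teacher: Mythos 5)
Your proposal is correct and follows essentially the same route as the paper's own proof: pass to the split null extension $S=A\oplus M$, use the degenerate Peirce decomposition $S=S_0\oplus S_1$ with the relations $(S_0)^2\subseteq S_0$ and $S_0S_1\subseteq S_0$ from \cite{CHP}, identify $A_i=S_i\cap A$ and $M_i=S_i\cap M$, and intersect with $M$. Your explicit check that $R_e$ preserves both summands (so the Peirce pieces split compatibly) is a slightly more careful rendering of a step the paper states tersely, but the argument is the same.
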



\begin{proof}

As in the proof of Theorem \ref{teorema 1}, if we consider the algebra $S=A\oplus M$, then  $S$ satisfies identity (\ref{1}) for $\beta\not=0$ and $\gamma=0$.
Moreover $e$ is an idempotent element  in $S$, so in this case the Peirce decomposition
of $S$ relative to $e$ is $S=S_0\oplus S_1$, where $S_0=\{a+m\in S\ |\ e(e(a+m))=0\}$ and $S_1=\{a+m\in S\ | \ e(a+m)=a+m\}$.

On the other hand we have that $A_i=S_i\cap A$ and $M_i=S_i\cap M.$ In fact $S_1\cap A=\{a+m\in S\ |\ e(a+m)=(a+m)\}\cap A=\{a\in A\ |\ ea=a\}=A_1$
 and $S_0\cap A=\{a\in a\ |\ e(ea)=0 \}=A_0.$ Similarly we have that  $M_i=S_i\cap M$. Moreover, the subspaces $S_i$ satisfy the following relations
$$S_0S_1\subseteq S_0,\ (S_0)^2\subseteq S_0,$$
and we obtain that
\begin{itemize}
\item[] $A_0\cdot M_0=(S_0\cap A)(S_0\cap M)\subseteq (S_0\cap M)=M_0.$
\item[] $A_1\cdot M_0=(S_1\cap A)(S_0\cap M)\subseteq (S_0\cap M)=M_0.$
\end{itemize}

Similarly we obtain that $A_0\cdot M_1\subseteq M_0$ and Lemma 3.2 follows.

\end{proof}

\subsection{Case $\beta+\gamma=0$}
Let $M$ be a vector space and $\rho:A\rightarrow End(M)$ a representation of $A$. We know by  identity (\ref{poli_car}) that the
minimal polynomial of $\rho_e$ and $R_e$ is $p(x)=x^2-x$. Then the Peirce decomposition of
 the algebra $A$ is $A=A_0\oplus A_1$, where $A_i=\{a\in A\ |\ ea=ia\}$ for $i=0,1$ . Similarly we have that $M=M_0\oplus M_1$, where $M_i=\{m\in M\ |\ \rho_e(m)=im\}$ for $i=0,1$.

We know that $A_0A_1=\{0\}$ and $(A_1)^2\subseteq A_1, $ (see \cite{CHP}).

\begin{lemma}\label{caso2}
Let $A$ be a generalized almost-Jordan algebra and  $\beta +  \gamma = 0. $  Suppose that $A$ has an idempotent element $e\not=0$. Let $\rho:A \rightarrow End (M)$ be a representation of $A.$ Then
$$A_0\cdot M_1=A_1\cdot M_0=\{0\}, \ A_1\cdot M_1\subseteq M_1.$$
\end{lemma}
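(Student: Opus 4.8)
The plan is to transfer the Peirce calculus from $A$ to the split null extension $S = A \oplus M$, exactly as was done in Theorem~\ref{teorema 1} and Lemma~\ref{lemagamma}; nothing in that technique depends on the particular value of $(\beta,\gamma)$ beyond the shape of the minimal polynomial of $\rho_e$. Since $\rho$ is a representation, $S$ is again a generalized almost-Jordan algebra for the same scalars $(\beta,\gamma)$ with $\beta+\gamma = 0$ (note $\beta \neq 0$, as $(\beta,\gamma)\neq(0,0)$), and $e$ is an idempotent of $S$. By identity~(\ref{poli_car}), in this case multiplication by $e$ in $S$ annihilates $p(x) = -\beta(x^2-x) = -\beta\,x(x-1)$, whose roots $0,1$ are simple; hence $S = S_0 \oplus S_1$ with $S_i = \{a+m \in S \mid e(a+m) = i(a+m)\}$ for $i=0,1$.

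First I would record, as in the previous proofs, that $A_i = S_i \cap A$ and $M_i = S_i \cap M$. This is immediate from the multiplication of $S$: for $a,a' \in A$ one has $(a)(a') = aa'$, so $ea = ia$ holds in $S$ iff $a \in A_i$; and for $m \in M$ one has $em = \rho_e(m)$, so $em = im$ in $S$ iff $\rho_e(m) = im$, i.e.\ $m \in M_i$. Next I would invoke the Peirce relations for the case $\beta+\gamma=0$ quoted from \cite{CHP}, now applied to $S$ in place of $A$, namely
$$S_0 S_1 = \{0\}, \qquad (S_1)^2 \subseteq S_1.$$
Intersecting with $A$ and $M$, and using that every product of an element of $A$ with an element of $M$ lies in $M$ (indeed $(a)(m) = \rho_a(m) \in M$), I obtain
$$A_0 \cdot M_1 = (S_0\cap A)(S_1\cap M) \subseteq S_0 S_1 = \{0\}, \qquad A_1 \cdot M_0 = (S_1\cap A)(S_0\cap M) \subseteq S_1 S_0 = \{0\},$$
$$A_1 \cdot M_1 = (S_1\cap A)(S_1\cap M) \subseteq (S_1)^2 \cap M \subseteq S_1 \cap M = M_1,$$
which are exactly the three asserted relations.

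I do not anticipate a genuine obstacle: the whole argument is a transfer of the Peirce decomposition from $A$ to $S$, and the only point requiring care is verifying that the two relations cited from \cite{CHP} for the $\beta+\gamma=0$ case are precisely the ones needed and that they hold for the idempotent $e$ of $S$ (they do, since $S$ is a generalized almost-Jordan algebra with the same scalars). If one prefers a citation-free derivation, the relations $S_0 S_1 = \{0\}$ and $(S_1)^2 \subseteq S_1$ can instead be read off directly from the operator identities~(\ref{repre1}) and (\ref{repre2}) specialized to $\beta+\gamma=0$ — which reduce to $\rho_{a^3} = \rho_a\rho_{a^2}$ and $2\rho_b\rho_a^2 + \rho_b\rho_{a^2} = 2\rho_a\rho_b\rho_a + \rho_{a^2b}$ — via linearization and evaluation at $a = e$; but the split-extension route is the shorter one and keeps the argument parallel to Lemma~\ref{lemagamma}.
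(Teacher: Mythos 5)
Your proposal is correct and follows essentially the same route as the paper's own proof: pass to the split null extension $S=A\oplus M$ (a generalized almost-Jordan algebra for the same scalars with $\beta+\gamma=0$), obtain the Peirce decomposition $S=S_0\oplus S_1$ from the fact that multiplication by $e$ satisfies $x(x-1)$ since $\beta\neq 0$, identify $A_i=S_i\cap A$ and $M_i=S_i\cap M$, and intersect the relations $S_0S_1=\{0\}$, $(S_1)^2\subseteq S_1$ of \cite{CHP} applied to $S$. The only differences are cosmetic: you make explicit the minimal-polynomial justification and the observation $\beta\neq 0$, which the paper leaves implicit.
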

\begin{proof}

The algebra $S=A\oplus M$, is a generalized almost-Jordan algebra  so $S$ satisfies identity  (\ref{1}) for $\beta$ and $\gamma$ satisfying $\beta+\gamma=0$. Moreover the Peirce decomposition of $S$ relative to the idempotent is $S=S_0\oplus S_1$, where $S_i=\{a+m\in S\ |\ e(a+m)=i(a+m)\}$ for $i=0,1$.

          As the above case we have that  $A_i=S_i\cap A$ and $M_i=S_i\cap M$, and in this case we have the following relations
$$S_0S_1=\{0\},\ (S_1)^2\subseteq S_1.$$
 Therefore, we have that
\begin{itemize}
\item[] $A_1\cdot M_1=(S_1\cap A)(S_1\cap M)\subseteq (S_1\cap M)=M_1,$
\item[] $A_1\cdot M_0=(S_1\cap A)(S_0\cap M)=\{0\},$
\end{itemize}
Similarly we have that $A_0\cdot M_1=\{0\}$.
\end{proof}

\subsection{Case $\beta+2\gamma=0$}
Let $\rho:A\rightarrow End(M)$ be a representation of a generalized almost-Jordan algebra and  $\beta+2\gamma=0.$
For these algebras the minimal polynomial of the operator  $R_e:A\rightarrow A$, is the same of the operator $\rho_e$ and it is given by $p(t)=t(t-1)^2$, (see identity (\ref{poli_car})). Then the Peirce decomposition of
 the algebra $A$ is $A=A_0\oplus A_1$, where
$A_0=\{x\in A\ |\ (ex)=0\}$ and $A_1=\{x\in A\ |\ (ex)e-2(ex)+x=0 \}$. Similarly we have that $M=M_0\oplus M_1$, where $M_0=\{m\in M\ |\ \rho_e(m)=0\}$ and $M_1=\{m\in M\ |\ (\rho_e-id)^2(m)=0 \}$.

Moreover we have the following relations (see \cite{CHP}).
$$A_0A_1=\{0\},\ (A_0)^2\subseteq A_0.$$

\begin{lemma}\label{lemabeta+2gama}
Let $A$ be a generalized almost-Jordan algebra and  $ \beta +2 \gamma = 0. $  Suppose that $A$ has an idempotent element $e\not=0$. Let $\rho:A \rightarrow End (M)$ be a representation of $A.$ Then
$$A_0\cdot M_1= A_1\cdot M_0=\{0\},\ A_0\cdot M_0\subseteq M_0.$$
\end{lemma}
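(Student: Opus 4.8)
The plan is to follow exactly the pattern established in the proofs of Theorem~\ref{teorema 1} and Lemmas~\ref{lemagamma} and~\ref{caso2}, transferring the multiplicative relations among the Peirce components of the split null extension down to the action of $A$ on $M$. Since $\rho$ is a representation of $A$, the split null extension $S = A \oplus M$ is again a generalized almost-Jordan algebra for the same pair of scalars $(\beta,\gamma)$, now satisfying $\beta + 2\gamma = 0$. The idempotent $e$ of $A$ remains idempotent in $S$, so by identity~(\ref{poli_car}) the operator $R_e$ on $S$ has minimal polynomial $p(t) = t(t-1)^2$, and $S$ admits the Peirce decomposition $S = S_0 \oplus S_1$, where $S_0 = \{a+m \in S \mid e(a+m) = 0\}$ and $S_1 = \{a+m \in S \mid (e(a+m))e - 2(e(a+m)) + (a+m) = 0\}$.

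Next I would identify the graded pieces. Just as in the earlier cases, intersecting with $A$ and $M$ gives $A_i = S_i \cap A$ and $M_i = S_i \cap M$ for $i = 0,1$: the defining condition $e(a+m) = 0$ restricts on $A$ to $ex = 0$ and on $M$ to $\rho_e(m) = 0$, while the quadratic condition $(R_e - \mathrm{id})^2 = 0$ restricts to the $A_1$-condition on $A$ and to $(\rho_e - \mathrm{id})^2(m) = 0$ on $M$. This is legitimate because multiplication by $e$ in $S$ acts as $e(a+m) = ea + \rho_e(m)$, which is block-diagonal with respect to $A \oplus M$, so $R_e$ restricts to $R_e$ on $A$ and to $\rho_e$ on $M$; consequently the generalized eigenspace decomposition of $S$ splits compatibly along $A$ and $M$.

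With these identifications in hand, the conclusion is immediate from the Peirce relations for this case recorded in \cite{CHP}, namely $S_0 S_1 = \{0\}$ and $(S_0)^2 \subseteq S_0$. Indeed $A_0 \cdot M_1 = (S_0 \cap A)(S_1 \cap M) \subseteq S_0 S_1 = \{0\}$, and by commutativity $A_1 \cdot M_0 = (S_1 \cap A)(S_0 \cap M) \subseteq S_1 S_0 = \{0\}$, which gives the first two equalities. Finally $A_0 \cdot M_0 = (S_0 \cap A)(S_0 \cap M) \subseteq (S_0)^2 \subseteq S_0$, and since this product also lies in $M$ we obtain $A_0 \cdot M_0 \subseteq S_0 \cap M = M_0$.

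The computation is entirely routine once the framework is set up; the only point requiring care---and the sole potential obstacle---is verifying the identifications $A_i = S_i \cap A$ and $M_i = S_i \cap M$ when the component $S_1$ is cut out by the quadratic condition $(R_e - \mathrm{id})^2 = 0$ rather than by an eigenvalue equation. Once the block-diagonal form of $R_e$ on the split null extension is noted, this compatibility follows, and the three asserted relations drop out directly from the two Peirce inclusions for $S$.
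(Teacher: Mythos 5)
Your proposal is correct and takes essentially the same route as the paper's own proof: pass to the split null extension $S=A\oplus M$ (again a generalized almost-Jordan algebra with $\beta+2\gamma=0$), identify $A_i=S_i\cap A$ and $M_i=S_i\cap M$ for the Peirce decomposition $S=S_0\oplus S_1$, and read off the three inclusions from the relations $S_0S_1=\{0\}$ and $(S_0)^2\subseteq S_0$. Your additional justification of the identification $M_i=S_i\cap M$ via the block-diagonal action of $R_e$ on $A\oplus M$ merely makes explicit a step the paper leaves implicit, as does your remark that $A_0\cdot M_0$ lies in $M$ as well as in $S_0$.
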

\begin{proof}

  The algebra $S=A\oplus M$, is a generalized almost-Jordan algebra so  $S$ satisfies identity (\ref{1}) for $\beta$ and $\gamma$ satisfying $\beta+2\gamma=0$. Moreover the Peirce decomposition of $S$ relative to the idempotent is $S=S_0\oplus S_1$, where $S_0=\{a+m\in S\ |\ e(a+m)=0\}$ and $S_1=\{a+m\in S\ | \ e(e(a+m))-2e(a+m)+ (a+m)=0\}$.

  As in the above Lemmas we have that $A_i=S_i\cap A$ and $M_i=S_i\cap M$. Moreover we have the following relations
$$S_0S_1=\{0\},\ (S_0)^2\subseteq S_0.$$
 Therefore we have
\begin{itemize}
\item[] $A_0\cdot M_0=(S_0\cap A)(S_0\cap M)\subseteq (S_0\cap M)=M_0,$
\item[] $A_1\cdot M_0=(S_1\cap A)(S_0\cap M)=\{0\}.$
\end{itemize}

Similarly we obtain that $A_0\cdot M_1=\{0\}$ and Lemma 3.6 follows.

\end{proof}


\section{Irreducible Modules}
Let $A$ be an algebra over $K$ and  $\rho: A \rightarrow End(M)$ a representation of A.
\begin{definition} Let $N$ be a subspace of  $M$, we will say that $N$ is a submodule of $M$ if and only if $A\cdot N\subseteq N$.\end{definition}

\begin{definition} We will say that $M$ is an irreducible module or that $\rho$ is an irreducible representation, if $ M \neq 0$, and $M$
has no proper submodules  or equivalently there is no proper subspace of $M$ which are invariant under all the transformations $\rho(a), a \in A.$ \end{definition}

\begin{proposition}\label{prop5}
 Let $A$ be a generalized almost-Jordan algebra and  $\beta, \gamma$ satisfying  $0\not \in \{\gamma,\beta+ \gamma,\beta+2\gamma \}$. Suppose that  $A$ has an idempotent element  $e\neq 0$. Let $\rho: A \rightarrow End(M)$ be an irreducible representation of $A. $
Then one of the following conditions hold.
$$(i) \; \; M=M_{\lambda} \;  \; \mbox{or} \; \;
 (ii) \; \; M=M_0 \;  \; \mbox{or} \; \;
  (iii) \; \; M=M_1$$
\end{proposition}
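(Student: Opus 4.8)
The plan is to read off the Peirce picture and translate the goal into a statement about the idempotent operator $\rho_e$. By Proposition \ref{pr:PeirceM} we have $M = M_0 \oplus M_1 \oplus M_\lambda$, the eigenspaces of $\rho_e$ for the three distinct eigenvalues $0, 1, \lambda$; saying that $M$ equals one of these summands is the same as saying that $\rho_e$ acts as a scalar on $M$. From Theorem \ref{teorema 1} I would record the full action table and note the $\mathbb{Z}/2$-flavoured pattern it conceals: writing $P = M_\lambda$ and $Q = M_0 \oplus M_1$, the operators coming from $A_0$ and $A_1$ send $P$ into $P$ and $Q$ into $Q$, whereas those coming from $A_\lambda$ send $P$ into $Q$ and $Q$ into $P$. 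Thus $A_\lambda$ interchanges $P$ and $Q$ while $A_0 \oplus A_1$ preserves the splitting, and this is the structure I would exploit.

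First I would dispose of the case $M_\lambda = 0$. Then $A_\lambda M_0, A_\lambda M_1 \subseteq M_\lambda = 0$, and together with $A_1 M_0 = A_0 M_1 = 0$, $A_0 M_0 \subseteq M_0$ and $A_1 M_1 \subseteq M_1$ this shows that both $M_0$ and $M_1$ are submodules. Irreducibility then forces one of them to be $0$ and the other to be $M$, giving conclusion $(ii)$ or $(iii)$.

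Next, assuming $M_\lambda \neq 0$, the target is $M = M_\lambda$. Here I would build a submodule out of $M_\lambda$: let $W$ be the smallest $(A_0 \oplus A_1)$-invariant subspace of $Q = M_0 \oplus M_1$ containing $A_\lambda M_\lambda$, and set $N = M_\lambda \oplus W$. Using $A_0 M_\lambda, A_1 M_\lambda \subseteq M_\lambda$, $A_\lambda M_\lambda \subseteq W$ and $A_\lambda(M_0 \oplus M_1) \subseteq M_\lambda$, one checks directly that $A \cdot N \subseteq N$, so $N$ is a submodule; since $M_\lambda \neq 0$, irreducibility gives $N = M$, i.e. $W = M_0 \oplus M_1$. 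It then remains to prove $M_0 \oplus M_1 = 0$, equivalently $A_\lambda M_\lambda = 0$.

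This last step is where I expect the real difficulty, and where the bare multiplication table from Theorem \ref{teorema 1} is not sufficient: under the present hypotheses the eigenspaces $M_0, M_1, M_\lambda$ need not be submodules, and the stronger vanishing $A_\lambda M_\lambda = 0$ is precisely what Theorem \ref{teorema 1} can guarantee only after adjoining the extra hypotheses $\beta \neq 0$ and $\beta + 3\gamma \neq 0$. To conclude under the weaker hypotheses I would return to the operator identities themselves, substituting Peirce-homogeneous elements $a \in A_\lambda$ (so that $a^2 \in A_0 \oplus A_1$ and $a^3 \in A_\lambda$, with $b$ ranging over the three components) into (\ref{repre1}), (\ref{repre2}), (\ref{linrepre1}) and (\ref{dif}), and then reading the resulting operator equalities separately on $M_0$, $M_1$ and $M_\lambda$. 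The aim is to show that these relations pin down the interchange maps $A_\lambda|_P$ and $A_\lambda|_Q$ tightly enough that a nonzero $Q$ forces a proper $\rho(A)$-invariant subspace, contradicting irreducibility and hence giving $Q = 0$ and $M = M_\lambda$. Carrying out this computation and verifying that no spurious invariant subspace survives is the principal obstacle.
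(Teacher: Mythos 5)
Your proposal is incomplete, and the gap is exactly where you say it is: in the case $M_\lambda \neq 0$ you reduce the Proposition (via the submodule $N = M_\lambda \oplus W$, which is a correct and even elegant construction) to proving $A_\lambda \cdot M_\lambda = 0$, but you never prove it --- you only describe a computational strategy (substituting Peirce-homogeneous elements into (\ref{repre1})--(\ref{dif})) and acknowledge that carrying it out is ``the principal obstacle.'' A proof that ends with a plan for its hardest step is not a proof, so as it stands the attempt establishes only the trichotomy in the subcase $M_\lambda = 0$.

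That said, the obstruction you ran into is genuine and exposes an imprecision in the paper itself. The paper's proof is two lines: it reads Theorem \ref{teorema 1} as saying that $M_0$ and $M_\lambda$ are submodules, and then irreducibility forces each to be $0$ or $M$, which yields $(i)$, $(ii)$ or $(iii)$. But, as you correctly observed, the bare relations of Theorem \ref{teorema 1} give $A_\lambda \cdot M_0 \subseteq M_\lambda$ and $A_\lambda \cdot M_\lambda \subseteq M_0 \oplus M_1$, so neither $M_0$ nor $M_\lambda$ is invariant; the invariance used by the paper comes only from the ``Moreover'' clause, i.e.\ from the additional hypotheses $\beta \neq 0$ and $\beta + 3\gamma \neq 0$, which the Proposition does not state. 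Indeed the paper's own Section 5.1 shows that for $\beta = 0$ (allowed by the Proposition's hypotheses) an irreducible module can satisfy $M = M_1 \oplus M_{-1}$, which lies outside the trichotomy --- so the vanishing $A_\lambda \cdot M_\lambda = 0$ that you were trying to extract from the identities cannot follow from the stated hypotheses alone, and your open step is unfillable as posed. Had you granted yourself $\beta \neq 0$ and $\beta + 3\gamma \neq 0$, your argument would collapse to the paper's: then $A_\lambda \cdot M_\lambda = A_\lambda \cdot M_0 = A_0 \cdot M_\lambda = 0$, your $W$ is zero, and $M_0$, $M_\lambda$ are already submodules, which is all the paper uses.
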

\begin{proof}
Using Theorem \ref{teorema 1}, we obtain that  $M_0$ and $M_{\lambda}$ are submodules of $M.$ Since $M$ is irreducible,
then $M=M_0$ or $M_0=\{0\}.$ On the other hand,  $M=M_{\lambda}$ or $M_{\lambda}=\{0\}$ and the Proposition follows. \end{proof}

\medskip
Linearizing  identity  (\ref{repre2})  we obtain
 \begin{multline}\label{linrepre2}
      (\beta +\gamma)(\rho_c\rho_{ab}+
      \rho_a\rho_{cb}+\rho_a\rho_c\rho_b+\rho_c\rho_a\rho_b+ \rho_{(ab)c}+\rho_{(cb)a})- \\ 2\beta(\rho_a\rho_b\rho_c +\rho_c\rho_b\rho_a+\rho_{(ac)b})- 2\gamma(\rho_b\rho_a\rho_c+ \rho_b\rho_c\rho_a+\rho_b\rho_{ac})=0
 \end{multline}

\begin{theorem}\label{teoasc}
Let $A$ be a generalized  almost-Jordan algebra with $\beta$ and $\gamma$ satisfying $0\not \in \{\beta, \gamma, \beta+ \gamma, \beta+2\gamma, \beta+3\gamma\}$. Suppose that  $A$ has an idempotent element $e\neq 0$, and  $M$ be an irreducible module. If $M=M_1$, then $M$ is an associative module.
\end{theorem}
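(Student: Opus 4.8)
The plan is to show that on $M=M_1$ the operators $\rho_a$, $a\in A_1$, pairwise commute and satisfy $\rho_{ab}=\rho_a\rho_b$, so that the action of $A$ on $M$ is associative. First I would record the reduction forced by $M=M_1$. Since $M=M_1$ we have $\rho_e=\mathrm{id}_M$, and by Theorem \ref{teorema 1} we have $A_0\cdot M_1=\{0\}$ and $A_\lambda\cdot M_1\subseteq M_\lambda=\{0\}$; hence $\rho_a=0$ on $M$ whenever $a\in A_0$ or $a\in A_\lambda$. Writing $a=a_0+a_1+a_\lambda$ gives $\rho_a=\rho_{a_1}$, so the representation is determined by its restriction to the subalgebra $A_1$, on which $e$ acts as the identity. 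It therefore suffices to prove $\rho_{ab}=\rho_a\rho_b=\rho_b\rho_a$ for all $a,b\in A_1$.

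The decisive step is to specialize the fully linearized identity (\ref{linrepre2}) by taking $c=e$ and $a,b\in A_1$. Then $\rho_e=\mathrm{id}_M$, $eb=b$, $ae=a$, and, since $ab\in A_1$, also $(ab)e=ab$, $(eb)a=ab$, $ac=a$ and $(ac)b=ab$. After this substitution each of the twelve terms of (\ref{linrepre2}) becomes a word in $\rho_a,\rho_b,\rho_{ab}$, and collecting coefficients collapses the identity to
$$(\beta+3\gamma)(\rho_{ab}+\rho_a\rho_b-2\rho_b\rho_a)=0.$$
Because $\beta+3\gamma\neq 0$, this yields $\rho_{ab}=2\rho_b\rho_a-\rho_a\rho_b$. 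This computation is the crux, and I expect it to be the main obstacle: the point is that the choice $c=e$ is exactly what turns the triple products $\rho_x\rho_y\rho_z$ into products of the two operators $\rho_a,\rho_b$ (two of the three factors become the identity), so the whole identity degenerates to a multiple of $\rho_{ab}+\rho_a\rho_b-2\rho_b\rho_a$; a generic $c$ would leave genuine triple products and carry no information. The work is purely the careful bookkeeping of the twelve terms.

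Now I would symmetrize. The identity (\ref{linrepre2}) is not symmetric in $a$ and $b$, so interchanging them (legitimate, since the identity holds for all arguments) gives the companion relation $\rho_{ab}=2\rho_a\rho_b-\rho_b\rho_a$. Equating the two expressions for $\rho_{ab}$ produces $3(\rho_a\rho_b-\rho_b\rho_a)=0$, and since $\mathrm{char}\,F\neq 3$ we conclude $\rho_a\rho_b=\rho_b\rho_a$; substituting back gives $\rho_{ab}=\rho_a\rho_b$ for all $a,b\in A_1$. As an independent consistency check, setting $b=e$ in (\ref{linrepre1}) and using $\beta+3\gamma\neq 0$ yields $\rho_{a^2}=\rho_a^2$, whose polarization $2\rho_{ab}=\rho_a\rho_b+\rho_b\rho_a$ agrees with what we have obtained.

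Finally I would extend the conclusion from $A_1$ to all of $A$. For arbitrary $a,b\in A$ one has $\rho_a=\rho_{a_1}$, $\rho_b=\rho_{b_1}$ and $\rho_{ab}=\rho_{(ab)_1}$ on $M_1$, where $(ab)_1=a_1b_1+(a_\lambda b_\lambda)_1$. Under the standing hypotheses $\beta\neq 0$ and $\beta+3\gamma\neq 0$ the argument of Theorem \ref{teorema 1} gives $(S_\lambda)^2=\{0\}$, and since $A_\lambda=S_\lambda\cap A$ this forces $(A_\lambda)^2=\{0\}$; hence the cross term $(a_\lambda b_\lambda)_1$ vanishes and $\rho_{ab}=\rho_{a_1b_1}=\rho_{a_1}\rho_{b_1}=\rho_a\rho_b$ on all of $A$. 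Thus the operators $\rho_a$ commute and satisfy $\rho_{ab}=\rho_a\rho_b$, i.e. $M$ is an associative module.
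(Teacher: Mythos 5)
Your proof is correct and follows essentially the same route as the paper: both hinge on specializing the linearized identity (\ref{linrepre2}) at $c=e$ with $a,b\in A_1$ to get $(\beta+3\gamma)(\rho_{ab}+\rho_a\rho_b-2\rho_b\rho_a)=0$, then interchanging $a$ and $b$ and using $\mathrm{char}\,F\neq 3$ to conclude $\rho_a\rho_b=\rho_b\rho_a=\rho_{ab}$. The only difference is that you justify the reduction to $A_1$ more carefully than the paper does, in particular the vanishing of the cross term $(a_\lambda b_\lambda)_1$ via $(A_\lambda)^2=\{0\}$, a point the paper leaves implicit when it asserts $(a,b,m)=(a_1,b_1,m)$.
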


\begin{proof}
 $M$ is associative if and only if
\begin{eqnarray}
  (a,b,m) &=& 0 \qquad \forall a,b\in A,\ m\in M \label{asc1} \\
  (a,m,b) &=& 0 \qquad \forall a,b\in A,\ m\in M \label{asc2}
\end{eqnarray}
Since $M=M_1$ we have $\rho_e=id_M$.  We must prove relations  (\ref{asc1}) and (\ref{asc2}). Let $a,b\in A$ and $m\in M$,
then $a=a_0+a_1+a_{\lambda}$ and $b=b_0+ b_1+ b_{\lambda}$ with $a_i,b_i\in A_i \; \; i=0,1,\lambda.$ We have that
$$(a,b,m)=(a_0+a_1+a_{\lambda},b_0+ b_1+ b_{\lambda},m)=(a_1,b_1,m)$$
Similarly we have that $(a,m,b)=(a_1,m,b_1).$ Therefore for proving that $M$ is associative, we must verify relations (\ref{asc1}) and (\ref{asc2}) for all  $a,b\in A_1$ and $m\in M=M_1$.
$$(a,b,m) = ab \cdot m - a \cdot (b \cdot m) = \rho_{ab}(m) - \rho_a(\rho_b(m)) =  (\rho_{ab} - \rho_a\rho_b)(m) $$ and $$(a,m,b) = (a \cdot m) \cdot b - a \cdot (m \cdot b) = b  \cdot (a \cdot m) - a \cdot (b \cdot m) = (\rho_b\rho_a - \rho_a\rho_b)(m).$$ Therefore we need to prove that $\rho_b\rho_a = \rho_a\rho_b =\rho_{ab}.$

Replacing $a,b\in A_1$ and $c=e$ in identity (\ref{linrepre2})  we have
 $$ (\beta+\gamma)(3\rho_{ab}+3\rho_a\rho_b)-2\beta(\rho_a\rho_b+\rho_b\rho_a+\rho_{ab})-6\gamma(\rho_b\rho_a) = 0.$$
 Reordering the terms we have
  $$(\beta+3\gamma)\rho_{ab}+(\beta+3\gamma)\rho_a\rho_b-2(\beta+3\gamma)\rho_b\rho_a = 0.$$ Since $ \beta+3\gamma \neq 0$ we obtain
  \begin{equation}
   \rho_{ab}+\rho_a\rho_b-2\rho_b\rho_a= 0 \label{asc3}
\end{equation}

Interchanging  $a$ and $b$ in identity (\ref{asc3}) we obtain
\begin{equation}\label{asc4}
    \rho_{ab}+\rho_b\rho_a-2\rho_a\rho_b= 0
\end{equation}
Finally subtracting identity (\ref{asc3})  and  identity (\ref{asc4}) and using that char$(F) \neq 3,$ we obtain $\rho_a\rho_b=\rho_b\rho_a.$ Then we have that $\rho_{ab}=\rho_a\rho_b$. That is (\ref{asc1}) and (\ref{asc2}) are valid for all $a,b\in A_1$, and $M$ is an associative module.
\end{proof}

In the case $M=M_{\lambda}$ we have the following result
\begin{theorem}\label{teoasc2}
Let $A$ be a generalized almost-Jordan algebra with $\beta$ and $\gamma$ satisfaying $0\not \in \{\beta, \gamma, \beta+ \gamma, \beta+2\gamma, \beta+3\gamma,\beta-\gamma\}$. Suppose that $A$ has an idempotent element $e\neq 0$, and $M$ be an irreducible module. If $M=M_{\lambda}$, then the following relations hold
\begin{itemize}
\item[(i)]$(a,m,b)=0\qquad \forall\  a,b\in A,\ m\in M$
\item[(ii)]$(ab)m=\lambda^{-1}a(bm)\qquad \forall\  a,b\in A,\ m\in M$.
\end{itemize}
\end{theorem}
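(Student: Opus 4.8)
The plan is to reduce everything to the $A_1$-component of the acting elements and to use crucially that $M=M_\lambda$ forces $M_0=M_1=\{0\}$. Since the hypotheses include $\beta\neq 0$ and $\beta+3\gamma\neq 0$, the ``moreover'' part of Theorem \ref{teorema 1} applies and gives $A_0\cdot M_\lambda=A_\lambda\cdot M_0=\{0\}$ and $A_\lambda\cdot M_\lambda=\{0\}$. Because $M=M_\lambda$, these read $A_0\cdot M=\{0\}$ and $A_\lambda\cdot M=\{0\}$, i.e. $\rho_{a_0}=\rho_{a_\lambda}=0$ for all $a_0\in A_0$, $a_\lambda\in A_\lambda$. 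Decomposing $a=a_0+a_1+a_\lambda$, this yields the basic reduction $\rho_a=\rho_{a_1}$ for every $a\in A$, while $\rho_e=\lambda\,\mathrm{id}_M$ since $e\in A_1$.

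I would first prove both relations for $a,b\in A_1$. Substituting $c=e$ in the linearized identity (\ref{linrepre2}), using $\rho_e=\lambda\,\mathrm{id}_M$ together with the Peirce products $ea=a$, $eb=b$ and $(ab)e=(eb)a=(ae)b=ab$ (valid since $(A_1)^2\subseteq A_1$), and simplifying the scalars with $(\beta+\gamma)\lambda=-\gamma$, the computation collapses to the single operator identity
\[
\gamma\,\rho_{ab}+\frac{\beta^{2}+2\beta\gamma-\gamma^{2}}{\beta+\gamma}\,\rho_a\rho_b+\frac{2\gamma^{2}}{\beta+\gamma}\,\rho_b\rho_a=0 .
\]
Interchanging $a$ and $b$ (with $\rho_{ab}=\rho_{ba}$) and subtracting cancels the $\rho_{ab}$-term and multiplies the commutator by $\frac{(\beta+3\gamma)(\beta-\gamma)}{\beta+\gamma}$; as $\beta+3\gamma$, $\beta-\gamma$ and $\beta+\gamma$ are all nonzero, this gives $\rho_a\rho_b=\rho_b\rho_a$. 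Feeding commutativity back into the displayed identity merges its last two terms into coefficient $\frac{(\beta+\gamma)^2}{\beta+\gamma}=\beta+\gamma$, whence $\gamma\,\rho_{ab}+(\beta+\gamma)\rho_a\rho_b=0$, that is $\rho_{ab}=\lambda^{-1}\rho_a\rho_b$ for $a,b\in A_1$.

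It then remains to pass to arbitrary $a,b\in A$. For (i) this is immediate: since $\rho_a=\rho_{a_1}$ and $\rho_b=\rho_{b_1}$, the associator is $(a,m,b)=(\rho_b\rho_a-\rho_a\rho_b)(m)=(\rho_{b_1}\rho_{a_1}-\rho_{a_1}\rho_{b_1})(m)=0$ by the commutativity just established. For (ii) I decompose the product $ab$: by the Peirce multiplication rules the only components of $ab$ lying in $A_1$ are $a_1b_1$ and the $A_1$-part of $a_\lambda b_\lambda$, so $\rho_{ab}=\rho_{a_1b_1}+Q$ with $Q:=\rho_{[a_\lambda b_\lambda]_1}$. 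Since $\rho_{a_1b_1}=\lambda^{-1}\rho_{a_1}\rho_{b_1}=\lambda^{-1}\rho_a\rho_b$ by the $A_1$-case, relation (ii) is equivalent to $Q=0$. I obtain this from a second specialization of (\ref{linrepre2}), placing the two $A_\lambda$-elements in the outer slots and $e$ in the middle slot, namely $a=a_\lambda$, $c=b_\lambda$, $b=e$. Using $\rho_{a_\lambda}=\rho_{b_\lambda}=0$, the Peirce identity $a_\lambda e=\lambda a_\lambda$, and $(a_\lambda b_\lambda)e=[a_\lambda b_\lambda]_1$, every term not proportional to $Q$ vanishes and the identity reduces to
\[
-\frac{2\beta(\beta+2\gamma)}{\beta+\gamma}\,Q=0 ,
\]
so $Q=0$ because $\beta$, $\beta+2\gamma$, $\beta+\gamma$ are nonzero and $\mathrm{char}\,F\neq 2$; this proves (ii).

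The main obstacle is precisely the cross term $Q=\rho_{[a_\lambda b_\lambda]_1}$: the reduction $\rho_a=\rho_{a_1}$ does not by itself control the $A_1$-component generated by a product of two elements of $A_\lambda$. A naive symmetrization of the $c=e$ identity only shows $\frac{\beta^{2}+2\beta\gamma-\gamma^{2}}{\beta+\gamma}\,Q=0$, and the standing hypotheses do not prevent $\beta^{2}+2\beta\gamma-\gamma^{2}$ from vanishing, so that route is insufficient. The decisive step is to position $e$ and the two $A_\lambda$-elements in (\ref{linrepre2}) so that the surviving scalar is $-\frac{2\beta(\beta+2\gamma)}{\beta+\gamma}$, which is nonzero under the given conditions; this is where the hypotheses $\beta\neq0$ and $\beta+2\gamma\neq0$ (beyond those used for commutativity) are consumed.
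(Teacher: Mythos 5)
Your proof is correct, and its core is exactly the paper's argument: substituting $c=e$ with $a,b\in A_1$ into (\ref{linrepre2}), using $(\beta+\gamma)\lambda=-\gamma$ to collapse the coefficients (your displayed identity is precisely the paper's identity (\ref{nuevo}) with $\lambda$ eliminated), symmetrizing in $a,b$ to get the commutator coefficient $\frac{(\beta+3\gamma)(\beta-\gamma)}{\beta+\gamma}$, and feeding commutativity back in to obtain $\gamma\rho_{ab}+(\beta+\gamma)\rho_a\rho_b=0$, i.e.\ $\rho_{ab}=\lambda^{-1}\rho_a\rho_b$. Where you diverge is the passage from $A_1$ to all of $A$: you treat the cross term $Q=\rho_{[a_\lambda b_\lambda]_1}$ as the ``main obstacle'' and kill it with a second specialization of (\ref{linrepre2}) (taking $a=a_\lambda$, $b=e$, $c=b_\lambda$, which does yield $-\frac{2\beta(\beta+2\gamma)}{\beta+\gamma}\,Q=0$; I checked the computation and it is valid). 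But this step solves a non-problem: under the standing hypotheses $\beta\neq 0$ and $\beta+3\gamma\neq 0$, the proof of Theorem \ref{teorema 1} (quoting Arenas's Peirce relations) gives $(S_\lambda)^2=S_0S_\lambda=\{0\}$ for any generalized almost-Jordan algebra with idempotent, and applying this to $A$ itself gives $(A_\lambda)^2=\{0\}$, so $a_\lambda b_\lambda=0$ identically and $Q=0$ for free. This is what the paper tacitly relies on when it asserts that it suffices to verify (i) and (ii) for $a,b\in A_1$. So your extra substitution is redundant, though harmless; on the positive side, your write-up makes explicit the reduction $\rho_a=\rho_{a_1}$ and the decomposition of $\rho_{ab}$, which the paper leaves implicit, and your accounting of exactly which scalar hypotheses enter at each step is more careful than the paper's.
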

\begin{proof}
 Since $M=M_{\lambda}$ we have that  $\rho_e=\lambda id$. We must prove  (i) and (ii) for all $a,b\in A_1$ an $m\in M$. Replacing $a,b\in A_1$ and  $c=e$ in
identity (\ref{linrepre2}) we have that
$$  (\beta+\gamma)(\lambda\rho_{ab}+\rho_a\rho_b+2\lambda\rho_a\rho_b+2\rho_{ab})-2\beta(\lambda\rho_a\rho_b+\lambda\rho_b\rho_a+\rho_{ab}) $$
  $$-2\gamma(2\lambda\rho_b\rho_a+\rho_b\rho_a) = 0 $$
  Reordering in term of $\rho_{ab}, \rho_a\rho_b $ and $\rho_b\rho_a$  we have
 $$ ((\beta+\gamma)(\lambda+2)-2\beta)\rho_{ab}+((\beta+\gamma)(2\lambda+1)-2\beta\lambda)\rho_a\rho_b$$
  $$-2(2\gamma\lambda+\gamma+\beta\lambda)\rho_b\rho_a=0 $$
Developing each coefficient and in the case of the coefficient of $\rho_{ab}$  we use the value of $\lambda,$  to get the identity
   \begin{equation}
      \gamma\rho_{ab}+(\beta+\gamma+2\gamma\lambda)\rho_a\rho_b-2\gamma\lambda \rho_b\rho_a=0 \label{nuevo}
\end{equation}
Interchanging  $a$ and $b$ in identity  (\ref{nuevo}) we have
\begin{equation}\label{nuevo1}
    \gamma\rho_{ab}+(\beta+\gamma+2\gamma\lambda)\rho_b\rho_a-2\gamma\lambda \rho_a\rho_b=0
\end{equation}
Subtracting  identity  (\ref{nuevo}) and identity (\ref{nuevo1}) we obtain
$$(\beta+\gamma+4\gamma\lambda)(\rho_a\rho_b-\rho_b\rho_a) = 0$$
Replacing the value of $ \lambda $ we obtain
\begin{equation}
\frac{(\beta+3\gamma)(\beta-\gamma)}{\beta+\gamma}(\rho_a\rho_b-\rho_b\rho_a)=
0
\end{equation}
Since $(\beta+3\gamma)\not=0$ and $(\beta-\gamma)\not=0$, we have $\rho_a\rho_b=\rho_b\rho_a.$ Therefore we have (i).  Using (\ref{nuevo}) we have
$$\gamma\rho_{ab}+(\beta+\gamma)\rho_a\rho_b=0$$ Since $ \beta + \gamma \neq 0,$ and using the value of $\lambda$ we obtain
\begin{equation}
-\lambda\rho_{ab} + \rho_a\rho_b=0
\end{equation}
Therefore $\rho_{ab}=\lambda^{-1}\rho_a\rho_b$,  we prove (ii), and the Theorem follows.\end{proof}


\section{Exceptional cases}

We now look at five cases that arose as exception in Theorem \ref{teoasc} and in
Theorem \ref{teoasc2}

\subsection{Case $\beta=0$}

In this case, since $\beta=0$  using  Theorem  \ref{teorema 1} we have that  $M_0$ is submodule and we have the following result

\begin{lemma}
Let $A$ be a generalized almost-Jordan algebra with $\beta=0$. Suppose that $A$ has an idempotent element $e\neq 0$. Let $\rho: A \rightarrow End(M)$ be an irreducible representation of $A$. Then
$M=M_0$ or $M=M_1 \oplus M_{-1}$, where $M_i=\{m\in M\ |\ \rho_e(m)=im\}$ para $i=0,1,-1$.
\end{lemma}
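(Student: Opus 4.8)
The plan is to run the problem entirely through the Peirce machinery already set up, the only new ingredient being one refined relation that must be extracted by hand because the general-position hypothesis $\beta\neq 0$ of Theorem~\ref{teorema 1} is now unavailable. First I would record the bookkeeping: since $(\beta,\gamma)\neq(0,0)$, the assumption $\beta=0$ forces $\gamma\neq 0$, and then $\lambda=\frac{-\gamma}{\beta+\gamma}=-1$. The three scalars $\gamma$, $\beta+\gamma=\gamma$ and $\beta+2\gamma=2\gamma$ are all nonzero (using char$(F)\neq 2$), so both Proposition~\ref{pr:PeirceM} and Theorem~\ref{teorema 1} apply with $\lambda=-1$. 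Hence $M=M_0\oplus M_1\oplus M_{-1}$ and the action obeys the table of Theorem~\ref{teorema 1}; in particular $A_0\cdot M_0\subseteq M_0$ and $A_1\cdot M_0=\{0\}$, so the \emph{only} term obstructing $M_0$ from being a submodule is $A_{-1}\cdot M_0\subseteq M_{-1}$.

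The heart of the proof is therefore the refined relation $A_{-1}\cdot M_0=\{0\}$. To obtain it I would specialize identity~(\ref{repre2}), which for $\beta=0$ (dividing by $\gamma$) reads $\rho_a\rho_{ab}+\rho_a^{2}\rho_b+\rho_{(ab)a}-2\rho_b\rho_a^{2}-\rho_b\rho_{a^2}=0$, to the asymmetric choice $a=e$ and $b\in A_{-1}$. Using $eb=-b$, $(eb)e=b$ and $e^2=e$, this collapses to the operator identity $-\rho_e\rho_b+\rho_e^{2}\rho_b+\rho_b-2\rho_b\rho_e^{2}-\rho_b\rho_e=0$. Evaluating it on an arbitrary $m\in M_0$, where $\rho_e(m)=0$ and where $\rho_b(m)\in M_{-1}$ satisfies $\rho_e\rho_b(m)=-\rho_b(m)$, the last two terms vanish while each of the first three contributes exactly $\rho_b(m)$, leaving $3\rho_b(m)=0$. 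Since char$(F)\neq 3$, this yields $\rho_b(m)=0$, i.e.\ $A_{-1}\cdot M_0=\{0\}$.

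With this relation established, $M_0$ is a submodule, because $A\cdot M_0=A_0\cdot M_0+A_1\cdot M_0+A_{-1}\cdot M_0\subseteq M_0$. Irreducibility of $\rho$ then leaves only two possibilities: either $M_0=M$, which is case $M=M_0$, or $M_0=\{0\}$, in which case $M=M_1\oplus M_{-1}$. This is exactly the asserted dichotomy, so no separate argument that $M_1\oplus M_{-1}$ is a submodule is needed.

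The step I expect to be the main obstacle is the derivation of $A_{-1}\cdot M_0=\{0\}$. The subtlety is the choice of specialization: feeding a generic $a\in A_{-1}$ into~(\ref{repre2}) or~(\ref{dif}) produces information about $\rho_a^{2}$ rather than about $\rho_a$ itself, and it is only the asymmetric substitution $a=e$, $b\in A_{-1}$ that isolates the single operator $\rho_b$ and produces the coefficient $3$ that the hypothesis char$(F)\neq 3$ can cancel. I would take particular care with the Peirce eigenvalue bookkeeping ($\rho_e=0$ on $M_0$ and $\rho_e=-1$ on $M_{-1}$), since that is precisely where the three surviving terms acquire equal signs.
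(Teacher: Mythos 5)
Your proof is correct, and it is actually more complete than what the paper itself provides. The paper offers no proof of this lemma: it simply asserts, just before the statement, that ``since $\beta=0$ using Theorem \ref{teorema 1} we have that $M_0$ is a submodule,'' and lets irreducibility finish the job. As you rightly point out, that citation does not literally work: the unconditional part of Theorem \ref{teorema 1} only gives $A_{-1}\cdot M_0\subseteq M_{-1}$ (with $\lambda=-\gamma/(\beta+\gamma)=-1$ here), while the refined clause $A_{\lambda}\cdot M_0=\{0\}$ is proved there under the hypothesis $\beta\neq 0$ --- precisely the hypothesis that fails in this case. Your derivation of the refined relation $A_{-1}\cdot M_0=\{0\}$ is exactly the missing ingredient, and it checks out: with $\beta=0$ identity (\ref{repre2}) becomes $\rho_a\rho_{ab}+\rho_a^{2}\rho_b+\rho_{(ab)a}-2\rho_b\rho_a^{2}-\rho_b\rho_{a^2}=0$; taking $a=e$, $b\in A_{-1}$ and evaluating on $m\in M_0$, the terms $\rho_b\rho_e^2(m)$ and $\rho_b\rho_e(m)$ vanish, and since $\rho_b(m)\in M_{-1}$ each of $-\rho_e\rho_b(m)$, $\rho_e^{2}\rho_b(m)$ and $\rho_{(eb)e}(m)=\rho_b(m)$ equals $\rho_b(m)$, giving $3\rho_b(m)=0$ and hence $\rho_b(m)=0$ since the characteristic is not $3$. (Your preliminary bookkeeping is also right: $\gamma\neq 0$, $\beta+\gamma=\gamma\neq0$, $\beta+2\gamma=2\gamma\neq 0$ by characteristic $\neq 2$, so Proposition \ref{pr:PeirceM} and the unconditional part of Theorem \ref{teorema 1} do apply.) The concluding dichotomy --- $M_0$ a submodule, so $M=M_0$ or $M_0=\{0\}$, in which case $M=M_1\oplus M_{-1}$ automatically from the Peirce decomposition of $M$ --- is exactly the paper's intended argument, and you are correct that no submodule property of $M_1\oplus M_{-1}$ is needed. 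In short, your proof has the same skeleton the paper intends, but it supplies and verifies the case-specific relation that the paper's bare appeal to Theorem \ref{teorema 1} does not actually cover, thereby closing a genuine gap in the paper's exposition.
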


\subsection{Case $\gamma=0$}

In this case,  using  Lemma   \ref{lemagamma} we have that  $M_0$ is submodule  and we have:

\begin{lemma}
Let $A$ be a generalized almost-Jordan algebra with $\gamma=0$. Suppose that $A$ has an idempotent element $e\neq 0$. Let $\rho: A \rightarrow End(M)$ be an irreducible representation of $A$. Then
$M=M_0$ or $M=M_1$, where $M_0=\{m\in M\ |\ \rho_e^2(m)=0\}$ y $M_1=\{m\in M\ |\ \rho_e(m)=m\}$.
\end{lemma}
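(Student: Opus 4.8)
The plan is to exhibit $M_0$ as a submodule of $M$ and then invoke irreducibility, exactly in the spirit of Proposition \ref{prop5}. First I would recall that for $\gamma = 0$ the minimal polynomial of $\rho_e$ is $p(t) = t^2(t-1)$ (see identity (\ref{poli_car})), so that $M$ decomposes as $M = M_0 \oplus M_1$ with $M_0 = \{m \in M \mid \rho_e^2(m) = 0\}$ and $M_1 = \{m \in M \mid \rho_e(m) = m\}$, and likewise $A = A_0 \oplus A_1$ for the algebra itself.

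The key step is to check that $M_0$ is invariant under the action of all of $A$. Since $A = A_0 \oplus A_1$, it suffices to control $A_0 \cdot M_0$ and $A_1 \cdot M_0$. But Lemma \ref{lemagamma} supplies precisely $A_0 \cdot M_0 \subseteq M_0$ and $A_1 \cdot M_0 \subseteq M_0$, whence $A \cdot M_0 = (A_0 + A_1)\cdot M_0 \subseteq M_0$, so that $M_0$ is a submodule of $M$. Now I would apply irreducibility: since $M \neq 0$ has no proper submodules, either $M_0 = \{0\}$ or $M_0 = M$. In the first case the decomposition $M = M_0 \oplus M_1$ collapses to $M = M_1$; in the second case it gives $M = M_0$. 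This yields the stated dichotomy.

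There is no serious obstacle: the whole argument is a direct consequence of the containment relations already established in Lemma \ref{lemagamma}, which confine the $A$-action on $M_0$. The only point deserving a line of care is the inclusion $A_1 \cdot M_0 \subseteq M_0$ (rather than spilling partly into $M_1$); this is exactly what Lemma \ref{lemagamma} provides, and it is what makes $M_0$—and not merely $M_1$—the correct submodule to test against irreducibility.
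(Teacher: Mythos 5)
Your proof is correct and takes essentially the same route as the paper: the paper derives this lemma directly from Lemma \ref{lemagamma}, observing that its containments $A_0\cdot M_0\subseteq M_0$ and $A_1\cdot M_0\subseteq M_0$ make $M_0$ a submodule, and then applies irreducibility to the decomposition $M=M_0\oplus M_1$ coming from the annihilating polynomial $t^2(t-1)$ of $\rho_e$. Your write-up merely spells out the steps the paper leaves implicit.
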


\begin{proposition}
 Let $A$ be a generalized almost-Jordan algebra with $\gamma=0$. Suppose that $A$ has an idempotent element $e\neq 0$. Let $\rho: A \rightarrow End(M)$ be an irreducible  representation of  $A.$ If  $(A_1)^2\subseteq A_1$ and $M=M_1$, then $M$ is an associative module.
\end{proposition}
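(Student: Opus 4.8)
The plan is to run the argument of Theorem \ref{teoasc} essentially verbatim, the only structural change being that here $\lambda=-\gamma/(\beta+\gamma)=0$, so the Peirce decomposition degenerates to $A=A_0\oplus A_1$, $M=M_0\oplus M_1$, and the relation $(A_1)^2\subseteq A_1$, which was automatic in the non-exceptional range, is now supplied as a hypothesis. Recall that $M$ is associative exactly when the two associators vanish, and that in terms of $\rho$ they read $(a,b,m)=(\rho_{ab}-\rho_a\rho_b)(m)$ and $(a,m,b)=(\rho_b\rho_a-\rho_a\rho_b)(m)$. Hence it suffices to establish $\rho_{ab}=\rho_a\rho_b=\rho_b\rho_a$.

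First I would reduce to $a,b\in A_1$. Since $M=M_1$ we have $M_0=\{0\}$, so Lemma \ref{lemagamma} gives $A_0\cdot M=A_0\cdot M_1\subseteq M_0=\{0\}$; that is, $A_0$ annihilates $M$. Writing $a=a_0+a_1$, $b=b_0+b_1$ and invoking the Peirce relations $A_0A_1\subseteq A_0$, $(A_0)^2\subseteq A_0$ together with the hypothesis $(A_1)^2\subseteq A_1$, every term in $(a,b,m)$ and $(a,m,b)$ whose leading factor lies in $A_0$ is killed, leaving $(a,b,m)=(a_1,b_1,m)$ and $(a,m,b)=(a_1,m,b_1)$. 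This is precisely the point at which $(A_1)^2\subseteq A_1$ is used: it guarantees $ab\in A_1$, so no spurious $A_0$-component of $ab$ survives.

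The core step is to specialize the linearized identity (\ref{linrepre2}), taken with $\gamma=0$ (so $\beta\neq0$ and $\beta$ may be cancelled), by setting $c=e$ and $a,b\in A_1$. Using $\rho_e=\mathrm{id}_M$ and the products $ea=a$, $eb=b$, $(ab)e=e(ab)=ab$ (the last from $(A_1)^2\subseteq A_1$), all triple products and all the $\rho_{(\cdots)}$ terms collapse, and I expect the identity to reduce exactly to
$$\rho_{ab}+\rho_a\rho_b-2\rho_b\rho_a=0,$$
which is verbatim relation (\ref{asc3}) of Theorem \ref{teoasc} (consistent with $\beta+3\gamma=\beta\neq0$ when $\gamma=0$). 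Interchanging $a$ and $b$ and using $\rho_{ba}=\rho_{ab}$ yields $\rho_{ab}+\rho_b\rho_a-2\rho_a\rho_b=0$; subtracting the two relations gives $3(\rho_a\rho_b-\rho_b\rho_a)=0$, whence $\rho_a\rho_b=\rho_b\rho_a$ since $\mathrm{char}(F)\neq3$. Substituting back gives $\rho_{ab}=\rho_a\rho_b=\rho_b\rho_a$, so both associators vanish and $M$ is associative.

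The argument is mechanical once the reduction is set up, and the only real obstacle is the bookkeeping in the specialization: one must confirm that each coefficient in (\ref{linrepre2}) genuinely collapses to the clean combination above. In particular, the hypotheses doing the work are $\beta\neq0$ (automatic, since $(\beta,\gamma)\neq(0,0)$ and $\gamma=0$), which legitimizes cancelling $\beta$, and $(A_1)^2\subseteq A_1$, which is exactly what lets one identify $\rho_{(ab)e}$ with $\rho_{ab}$; no separate condition on $\beta+3\gamma$ or $\beta+2\gamma$ is needed because these degenerate harmlessly when $\gamma=0$.
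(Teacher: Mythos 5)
Your proof is correct and is essentially the paper's own argument: specialize the linearized identity (\ref{linrepre2}) at $c=e$ with $a,b\in A_1$, obtain $\rho_{ab}+\rho_a\rho_b-2\rho_b\rho_a=0$, interchange $a$ and $b$, and subtract, using that the characteristic of $F$ is not $3$, to conclude $\rho_a\rho_b=\rho_b\rho_a=\rho_{ab}$. The only difference is that you make explicit the reduction of general $a,b\in A$ to $a_1,b_1\in A_1$ (via Lemma \ref{lemagamma} and the Peirce relations $A_0A_1\subseteq A_0$, $(A_0)^2\subseteq A_0$ together with the hypothesis $(A_1)^2\subseteq A_1$), a step the paper asserts without detail.
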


\begin{proof}
 Suppose $ \gamma = 0, (A_1)^2\subseteq A_1$ and $M=M_1$, that is $\rho_e = id_M.$  We need to prove that  $(a,b,m)=(a,m,b)=0$ for all  $a,b\in A_1, \; m\in M.$
 But $$(a,b,m) = ab \cdot m - a \cdot (b \cdot m) = \rho_{ab}(m) - \rho_a(\rho_b(m)) =  (\rho_{ab} - \rho_a\rho_b)(m) $$ and $$(a,m,b) = (a \cdot m) \cdot b - a \cdot (m \cdot b) = b  \cdot (a \cdot m) - a \cdot (b \cdot m) = (\rho_b\rho_a - \rho_a\rho_b)(m)$$

 Replacing  $a,b\in A_1$ and $c=e$ en relation (\ref{linrepre2}) we obtain $\rho_a\rho_b+\rho_{ab}-2\rho_b\rho_a=0$. Interchanging $a$ and $b$ in the above identity we obtain
  $\rho_b\rho_a+\rho_{ab}-2\rho_a\rho_b=0$. Subtracting  both identities we have that $\rho_a\rho_b =\rho_b\rho_a$. So $ \rho_{ab}=\rho_a\rho_b  $ and $M$ is an associative module.
\end{proof}

\subsection{Case $\beta+\gamma=0$}

In this case using  Lema  \ref{caso2} we have that $M_1$ is submodule of $M$ and we have the following result

\begin{lemma}\label{irr1}
 Let $A$ be a generalized almost-Jordan algebra with $\beta+\gamma=0$. Suppose that  $A$ has an idempotent element $e\neq 0.$ Let $\rho: A \rightarrow End(M)$ be an irreducible  representation of  $A.$ Then
$M=M_0$ \'o $M=M_1$.
\end{lemma}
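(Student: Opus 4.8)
The plan is to exhibit one of the two Peirce components of $M$ as a submodule and then invoke irreducibility. First I would record the Peirce decomposition of $M$ in this case: since $\beta+\gamma=0$, identity (\ref{poli_car}) shows that $\rho_e$ annihilates $p(x)=x^2-x=x(x-1)$, whose roots $0$ and $1$ are simple, so $\rho_e$ is diagonalizable and $M=M_0\oplus M_1$ with $M_i=\{m\in M\mid \rho_e(m)=im\}$, exactly as in the discussion preceding Lemma \ref{caso2}.

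Next I would show that $M_1$ is a submodule of $M$. Write $A=A_0\oplus A_1$ and take an arbitrary $a=a_0+a_1\in A$ with $a_i\in A_i$, together with $m\in M_1$. Then $a\cdot m=a_0\cdot m+a_1\cdot m$, and by Lemma \ref{caso2} we have $a_0\cdot m\in A_0\cdot M_1=\{0\}$ and $a_1\cdot m\in A_1\cdot M_1\subseteq M_1$. Hence $a\cdot m\in M_1$, so $A\cdot M_1\subseteq M_1$ and $M_1$ is indeed a submodule.

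Finally, irreducibility finishes the argument: since $M\neq 0$ has no proper submodules, the submodule $M_1$ must be either $\{0\}$ or all of $M$. If $M_1=\{0\}$ then $M=M_0$; if $M_1=M$ then $M=M_1$. This proves the Lemma.

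The argument is short precisely because it only requires $M_1$ to be a submodule, so there is no real obstacle to overcome. The one point to watch — and the reason this case is not fully symmetric in $M_0$ and $M_1$ as in Proposition \ref{prop5} — is that Lemma \ref{caso2} gives no control over $A_0\cdot M_0$, so I cannot assert that $M_0$ is a submodule. This is harmless here: the direct-sum decomposition $M=M_0\oplus M_1$ together with $M_1$ being a submodule already forces $M$ to coincide with one of the two components.
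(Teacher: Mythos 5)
Your proof is correct and is essentially the paper's own argument: the paper likewise obtains $M=M_0\oplus M_1$ from identity (\ref{poli_car}) (the minimal polynomial of $\rho_e$ divides $x^2-x$, which has simple roots), notes via Lemma \ref{caso2} that $A_0\cdot M_1=\{0\}$ and $A_1\cdot M_1\subseteq M_1$ make $M_1$ a submodule, and concludes by irreducibility. Your closing observation that only $M_1$ (not $M_0$) can be shown to be a submodule, because $A_0\cdot M_0$ is uncontrolled in this case, matches the paper exactly.
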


\begin{proposition}
 Let $A$ be a generalized almost-Jordan algebra with $\beta+\gamma=0$. Suppose that  $A$ has an idempotent element $e\neq 0.$ Let $\rho: A \rightarrow End(M)$ be an irreducible  representation of  $A.$ If $(A_0)^2\subseteq A_0$ and $\rho_e\not=0$, then $M$  is an associative module.
\end{proposition}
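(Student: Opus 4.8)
The plan is to follow the template of Theorem~\ref{teoasc} and the preceding proposition, adapting the computation to the hypothesis $\beta+\gamma=0$. First I would pin down $M$. Since $\beta+\gamma=0$ forces $\beta\neq0$ (otherwise $(\beta,\gamma)=(0,0)$), identity~(\ref{poli_car}) reduces to $\beta(\rho_e-\rho_e^{2})=0$, so $\rho_e$ is diagonalizable with eigenvalues in $\{0,1\}$ and $M=M_0\oplus M_1$. By Lemma~\ref{irr1}, irreducibility gives $M=M_0$ or $M=M_1$; but $M=M_0$ would mean $\rho_e=0$, contradicting $\rho_e\neq0$. Hence $M=M_1$ and $\rho_e=\mathrm{id}_M$.

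Next I would reduce the associativity conditions $(a,b,m)=0$ and $(a,m,b)=0$ to the Peirce component $A_1$. Writing $a=a_0+a_1$, $b=b_0+b_1$ with $a_i,b_i\in A_i$, Lemma~\ref{caso2} gives $A_0\cdot M_1=\{0\}$, so $\rho_{a_0}=\rho_{b_0}=0$ on $M=M_1$. Using $A_0A_1=\{0\}$, $(A_1)^2\subseteq A_1$ and the hypothesis $(A_0)^2\subseteq A_0$, the product decomposes as $ab=a_0b_0+a_1b_1$ with $a_0b_0\in A_0$ and $a_1b_1\in A_1$, whence $\rho_{ab}=\rho_{a_1b_1}$ on $M_1$. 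Therefore $(a,b,m)=(\rho_{a_1b_1}-\rho_{a_1}\rho_{b_1})(m)$ and $(a,m,b)=(\rho_{b_1}\rho_{a_1}-\rho_{a_1}\rho_{b_1})(m)$, and it suffices to prove $\rho_{ab}=\rho_a\rho_b=\rho_b\rho_a$ for all $a,b\in A_1$. This is precisely the step where $(A_0)^2\subseteq A_0$ is used, to guarantee that $a_0b_0$ annihilates $M_1$.

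For the final step I would substitute $a,b\in A_1$ and $c=e$ into the linearized identity~(\ref{linrepre2}). Because $\beta+\gamma=0$, the entire first parenthesized group vanishes, and since $e\in A_1$ we have $ae=a$ and $\rho_e=\mathrm{id}$, so $\rho_{ac}=\rho_a$ and $\rho_{(ac)b}=\rho_{ab}$. After dividing by the nonzero scalar $2\beta$ and using $\gamma=-\beta$, the relation collapses to $\rho_{ab}+\rho_a\rho_b-2\rho_b\rho_a=0$. Interchanging $a$ and $b$ (and using $ab=ba$) gives $\rho_{ab}+\rho_b\rho_a-2\rho_a\rho_b=0$; subtracting the two and using $\mathrm{char}(F)\neq3$ yields $\rho_a\rho_b=\rho_b\rho_a$, and then $\rho_{ab}=\rho_a\rho_b$.

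I do not anticipate a serious obstacle: the only delicate point is the bookkeeping in the reduction step, where one must verify that every term involving an $A_0$-component of $a$ or $b$ (including the product $a_0b_0$) annihilates $M_1$, which is exactly what $(A_0)^2\subseteq A_0$ and Lemma~\ref{caso2} secure. Once the $(\beta+\gamma)$-group of~(\ref{linrepre2}) drops out, the remaining manipulation is the same three-term computation already carried out in Theorem~\ref{teoasc}, so the main work is organizational rather than computational.
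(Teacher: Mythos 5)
Your proposal is correct and follows essentially the same route as the paper: use Lemma~\ref{irr1} together with $\rho_e\neq0$ to get $M=M_1$ and $\rho_e=\mathrm{id}_M$, use $(A_0)^2\subseteq A_0$ and the Peirce relations (via Lemma~\ref{caso2}) to reduce the associators to components in $A_1$, and then run the substitution $a,b\in A_1$, $c=e$ in identity~(\ref{linrepre2}) exactly as in Theorem~\ref{teoasc} (where the needed nonvanishing $\beta+3\gamma=-2\beta\neq0$ holds automatically). Your write-up merely makes explicit the bookkeeping that the paper leaves implicit, and correctly notes the small simplification that the $(\beta+\gamma)$-block of~(\ref{linrepre2}) vanishes outright.
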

\begin{proof}
As the above results we need to prove that  $(a,b,m)=(a,m,b)=0 \; \; \forall \; \; a,b\in A, m\in M$. Since $\rho_e\not=0$  Lema \ref{irr1} implies that $\rho_e=id_M$. Moreover we have that $(A_0)^2\subseteq A_0$,  so if $a=a_0+a_1$ and $b=b_0+b_1$ with $a_i,b_i\in A_i$, we have that
$(a,b,m)=(a_1,b_1,m)$ y $(a,m,b)=(a_1,m,b_1)$ and we only need to take  $a,b\in A_1$. With the same  argument using in the proof  of  Teorema \ref{teoasc} we prove that $M$ is an associative module.
\end{proof}

 The next example shows an irreducible module  of dimension $2,$ in the case $\beta+\gamma=0$
\begin{example}
Let us consider the algebra $A$ of base $\{e,a\}$  and multiplication table $e^2 = e, ea = ae = 0, a^2 = e^,$  given in  Example \ref{ej1}. Let  $M$ be a $2$-dimensional
 $\mathbb{R}$ - vector space $M$
and   $\{v,w\}$ a base of $M$. We define a linear map $\rho:A\rightarrow End(M)$ by $\rho_e=0$ and  $\rho_a(\lambda_1v+\lambda_2w)=(2\lambda_2-\lambda_1)v+(\lambda_2-\lambda_1)w$.
Then   $\rho$  satisfies (\ref{repre1}) and (\ref{repre2}), so  $\rho$ is a representation of A.
Suppose that $M$  is not  irreducible, that is,  there exists  a submodule $N=\mathbb{R}m$ for some $m\in M -\{0\}$. Let   $m=\lambda_1v+\lambda_2w\not=0$, since $N$ is a submodule of $M$,
we have that $\rho_x(m)=b_x m$ for some $b_x \in \mathbb{R}$, and for all  $x\in A$. Taking $x = a$ we have that  $\rho_a(m)=b_am$, and we obtain that
    $$(2\lambda_2-\lambda_1) = b_a\lambda_1, \; \;   (\lambda_2-\lambda_1)= b_a\lambda_2$$
  From the first identity we have $\lambda_2=\frac{(b_a+1)}{2}\lambda_1$, and replacing this value in the second identity we have
  \begin{eqnarray}
    \frac{(b_a+1)}{2}\lambda_1-\lambda_1 &=b_a\frac{(b_a+1)}{2}\lambda_1 \nonumber \\
    (b_a+1)\lambda_1-2\lambda_1 &=& b_a(b_a+1)\lambda_1 \nonumber\\
    ((b_a)^2+b_a-b_a-1+2)\lambda_1 &=& 0 \nonumber\\
    ((b_a)^2+1)\lambda_1 &=& 0 \nonumber
  \end{eqnarray}
Since the polynomial  $x^2+1=0$ is irreducible in $\mathbb{R}[x]$, we obtain that  $\lambda_1=0,$ and then $\lambda_2=0.$ A contradiction since  $m\not=0$. Therefore $M$ is a $2$-dimensional  irreducible
module.
\end{example}

\subsection{$\beta+2\gamma=0$}

 Lema \ref{lemabeta+2gama} implies that $M_0$ is un submodule of $M$, and we have

\begin{lemma}
 Let $A$ be a generalized almost-Jordan algebra with $\beta+2\gamma=0$. Suppose that  $A$ has an idempotent element $e\neq 0.$ Let $\rho: A \rightarrow End(M)$ be an irreducible  representation of  $A.$  Then
$M=M_0$ or $M=M_1$, where $M_0=\{m\in M\ |\ \rho_e(m)=0\}$ and $M_1=\{m\in M\ |\ (\rho_e-id)^2(m)=0\}$.
\end{lemma}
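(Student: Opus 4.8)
The plan is to exploit the fact, already noted just before the statement, that Lemma \ref{lemabeta+2gama} forces $M_0$ to be a submodule of $M$, and then to invoke irreducibility. This mirrors exactly the argument used in the earlier exceptional cases (the lemmas for $\gamma=0$ and for $\beta+\gamma=0$), so the structure should be completely parallel.

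First I would recall the Peirce decomposition $M=M_0\oplus M_1$ that is available here because, for $\beta+2\gamma=0$, the minimal polynomial of $\rho_e$ is $p(t)=t(t-1)^2$; thus $M_0=\{m\in M\mid \rho_e(m)=0\}$ and $M_1=\{m\in M\mid (\rho_e-\mathrm{id})^2(m)=0\}$. Next I would verify that $M_0$ is a submodule, i.e.\ that $A\cdot M_0\subseteq M_0$. Writing $A=A_0\oplus A_1$, this reduces to the two relations supplied by Lemma \ref{lemabeta+2gama}, namely $A_0\cdot M_0\subseteq M_0$ and $A_1\cdot M_0=\{0\}\subseteq M_0$. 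Combining these gives $A\cdot M_0=(A_0\oplus A_1)\cdot M_0\subseteq M_0$, so $M_0$ is indeed a submodule.

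Since $\rho$ is irreducible and $M_0$ is a proper-or-full submodule, we must have either $M_0=\{0\}$ or $M_0=M$. In the first case the decomposition $M=M_0\oplus M_1$ collapses to $M=M_1$; in the second case $M=M_0$. This yields the claimed dichotomy and completes the argument.

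I do not anticipate any genuine obstacle: the entire content of the lemma is packaged in Lemma \ref{lemabeta+2gama}, and the only step requiring care is confirming that \emph{both} of its product relations are needed to conclude $A\cdot M_0\subseteq M_0$. In particular one should not overlook the $A_1\cdot M_0$ term, which vanishes rather than merely landing in $M_0$; it is the combination of the two that makes $M_0$ invariant. Everything else is a one-line appeal to irreducibility.
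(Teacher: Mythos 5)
Your proposal is correct and follows essentially the same route as the paper: the paper's (implicit, one-line) proof likewise observes that Lemma \ref{lemabeta+2gama} makes $M_0$ a submodule and then appeals to irreducibility to get $M_0=\{0\}$ (hence $M=M_1$ by the Peirce decomposition $M=M_0\oplus M_1$) or $M_0=M$. You simply spell out the details the paper leaves tacit, including the correct observation that both relations $A_0\cdot M_0\subseteq M_0$ and $A_1\cdot M_0=\{0\}$ are needed for invariance.
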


\subsection{$\beta+3\gamma=0$}
These algebras are the almost-Jordan algebras and it is known that for this kind of algebras every irreducible module is a Jordan module (see \cite{Sidorov}).

\bigskip

{\bf Open problems:} We do not know which is the situation with  an irreducible module $M$,
\begin{enumerate}
\item In the case $ M = M_0 $
\item In the case $\beta-\gamma=0,$ that is $A$ satisfies the identity,
$(yx^2)x + yx^3 -2((yx)x)x = 0.$
\item In the case $\beta=0,$ that is $A$ satisfies the identity, $ yx^3 -((yx)x)x = 0.$
\item In the case $\beta+2\gamma=0,$  that is $A$ satisfies the identity, $ yx^3 -2 (yx^2)x +((yx)x)x = 0.$
\end{enumerate}
In the last two cases we only know that $M_0$ is a submodule of $M.$


\end{document}